\setlist[enumerate,1]{label=\textnormal{(\roman*)}}
\setlist[itemize,1]{label={--}}
\numberwithin{equation}{section}
\tikzset{->-/.style={decoration={markings, mark=at position .5 with {\arrow{>}}}, postaction={decorate}}}
\tikzset{-<-/.style={decoration={markings, mark=at position .5 with {\arrow{<}}}, postaction={decorate}}}
\DeclareMathOperator{\val}{val}
\DeclarePairedDelimiter{\abs}{\lvert}{\rvert}
\newtheorem{theorem}[subsection]{Theorem}
\newtheorem{definition}[subsection]{Definition}
\newtheorem{lemma}[subsection]{Lemma}
\theoremstyle{remark}
\newtheorem{example}[subsection]{Example}
\newtheorem{remark}[subsection]{Remark}
\newcommand{\K}{\mathsf{k}}
\newcommand{\X}{\mathsf{x}}
\newcommand{\Y}{\mathsf{y}}
\renewcommand{\H}{\mathsf{h}}
\newcommand{\KK}{\mathsf{K}}
\newcommand{\KKb}{\overline{\mathsf{K}}}
\newcommand{\XX}{\mathsf{X}}
\newcommand{\YY}{\mathsf{Y}}
\newcommand{\myss}[1]{\noindent\underline{\emph{#1}}:\:}
\newcommand{\explain}[1]{\mbox{(#1)}}
\newcommand{\dmrjdel}[1]{}
\newcommand{\al}{\alpha}
\newcommand{\be}{\beta}
\newcommand{\De}{\Delta}
\newcommand{\varep}{\varepsilon}
\newcommand{\lam}{\lambda}
\newcommand{\id}{\mathrm{id}}
\newcommand{\rar}{\rightarrow}
\newcommand{\fsl}{\mathfrak{sl}}
\newcommand{\oc}{\circ}
\newcommand{\ot}{\otimes}
\newcommand{\half}{\frac{1}{2}}
\newcommand{\q}{q}
\newcommand{\qb}{\overline{q}}	% \bar ok
\newcommand{\Kb}{\overline{\K}}
\newcommand{\Qbinom}[2]{\left[\begin{array}{c} \!\!\! #1 \!\!\!  \\  \!\!\! #2 \!\!\! \end{array}\right]}
\newcommand{\Exp}[1]{\mathrm{Exp}_{#1}}
\newcommand{\ehHtH}{e^{\frac{h}{4}\, \H\ot \H  }}
\newcommand{\fa}{\mathfrak{a}}
\newcommand{\cA}{\mathcal{A}}
\newcommand{\fb}{\mathfrak{b}}
\newcommand{\cB}{\mathcal{B}}
\newcommand{\bbC}{\mathbb{C}}
\newcommand{\cD}{\mathcal{D}}
\newcommand{\bI}{\mathbf{I}}
\newcommand{\cK}{\mathcal{K}}
\newcommand{\bM}{\mathbf{M}}
\newcommand{\bbQ}{\mathbb{Q}}
\newcommand{\bbR}{\mathbb{R}}
\newcommand{\cS}{\mathcal{S}}
\newcommand{\sR}{\mathsf{R}}
\newcommand{\sS}{\mathsf{S}}
\newcommand{\fS}{\mathfrak{S}}
\newcommand{\cU}{\mathcal{U}}
\newcommand{\bu}{\mathbf{u}}
\newcommand{\bv}{\mathbf{v}}
\newcommand{\bbZ}{\mathbb{Z}}
\newcommand{\bm}{\mathbf{m}}
\newcommand{\bfe}{\mathbf{e}}
\newcommand{\Uhsltwo}{\cU_h(\fsl_2)}
\newcommand{\Uhsl}[1]{\cU_h(\fsl_{#1})}
\newcommand{\qr}{q^{\frac{1}{2}}}
\newcommand{\qbr}{\qb^{\frac{1}{2}}}
\newcommand*{\coloneqq}{\mathrel{\rlap{%
           \raisebox{0.3ex}{$\m@th\cdot$}}%
           \raisebox{-0.3ex}{$\m@th\cdot$}}%
           =}
\newcommand*{\eqqcolon}{=\mathrel{\rlap{%
           \raisebox{0.3ex}{$\m@th\cdot$}}%
           \raisebox{-0.3ex}{$\m@th\cdot$}}%
       }
\newcommand{\Wterm}[2]{\frac{{#1}^{#2}}{[#2]!}}
\title[Combinatorial aspects of $\Uhsl{n+1}$]{Combinatorial aspects of the quantized universal enveloping algebra of $\fsl_{n+1}$}
\author[R. Cheng]{Raymond Cheng}
\address{Department of Mathematics \\
  Columbia University \\
  2990 Broadway \\
  New York, NY, USA 10027
}
\email{rcheng@math.columbia.edu}
\author[D.~M.~Jackson]{David~M.~Jackson}
\address{Department of Combinatorics and Optimization \\
  University of Waterloo \\
  200 University Avenue West \\
  Waterloo, ON, Canada  N2L 3G1
}
\email{dmjackso@uwaterloo.ca}
\author[G.~J.~Stanley]{Geoff~J.~Stanley}
\address{Department of Physics \\
  Oxford University \\
  Parks Road \\
  Oxford, UK OX1 3PU
}
\email{geoff.stanley@physics.ox.ac.uk}
\date{July 2, 2018}
\keywords{q-combinatorics; straightening; quantized universal enveloping algebra; ribbon Hopf algebra; R-matrix.}
\subjclass{Primary 05E15; Secondary 17B37, 16T05.}
\begin{document}

\begin{abstract}
% Wednesday 14 June 2017:
Quasi-triangular Hopf algebras were introduced by Drinfel'd in his construction of
solutions to the Yang--Baxter Equation.
This algebra is built upon $\mathcal{U}_h(\mathfrak{sl}_2)$, the
quantized universal enveloping algebra of the Lie algebra $\mathfrak{sl}_2$.
In this paper, combinatorial structure in $\mathcal{U}_h(\mathfrak{sl}_2)$ is
elicited, and used to assist in highly intricate calculations in this algebra.
To this end, a combinatorial methodology is formulated for straightening
algebraic expressions to a canonical form in the case $n=1$.
We apply this formalism to the quasi-triangular Hopf algebras and obtain a
constructive account not only for the derivation of the Drinfel'd's $R$-matrix,
but also for the arguably mysterious ribbon elements of $\mathcal{U}_h(\mathfrak{sl}_2)$.
Finally, we extend these techniques to the higher dimensional algebras
$\mathcal{U}_h(\mathfrak{sl}_{n+1})$.
While these explicit algebraic results are well-known, our contribution is in
our formalism and perspective: our emphasis is on the combinatorial structure
of these algebras and how that structure may guide algebraic constructions.
\end{abstract}
\maketitle

\section{Introduction}

\subsection{Motivation: Knot Theory}
%-------------------------------------------------------------------------------------------------
A rich setting in which quasi-triangular Hopf algebras appear is knot theory, so
we shall begin by explaining very briefly and informally some of the
background to this. A \emph{knot} is an embedding of the unit circle into \(\bbR^3\)
and two knots are equivalent if one may be transformed into the other smoothly:
that is, without cutting and re-attaching the ends. An essential question in knot
theory is how to construct a map \(\theta\colon \cK\rar \cS\), from the set \(\cK\)
of all knots to a set \(\cS\) such that if \(\fa\) and \(\fb\) are knots, then
\(\theta(\fa) \neq \theta(\fb)\) implies that \(\fa\) and \(\fb\) are \emph{inequivalent}
knots.  The map \(\theta\) is called a \emph{knot invariant}.

The discovery in the 1990's that the Yang--Baxter Equation, which appeared in
mathematical physics, also arose in knot theory prompted a remarkable
resurgence of activity in knot theory and, \emph{obiter dictu}, marked the
beginning of what is now commonly termed \emph{Modern Knot Theory}.
%
% NEW
The appearance of the Yang--Baxter Equation may be seen as follows.
An oriented knot may be represented in the plane by its regular projection as a
four regular graph, together with marks attached to each vertex to indicate
whether a crossing is positive or negative.
Such an object is called a knot diagram.
A result of Alexander~\cite{al} shows that a knot diagram may be viewed as the
closure of a braid by selecting a base point in the plane of a knot diagram,
and then using the (isotopy preserving) Reidemeister Moves in such a way that
each segment of the diagram between successive vertices is directed in the
anti-clockwise sense around the base point.
The braid, in turn, may be expressed as a product of the braid generators
\(s_1, \ldots, s_{n-1}\), for an \(n\)-stranded braid, where \(s_i\) is the
transposition \((i,i+1)\), together with the braid relations, of which one is
\(s_{i} s_{i+1} s_{i} =  s_{i+1} s_{i} s_{i+1}\) for \(i=1,\ldots,n-1\).
This accounts for the appearance of (matrix) representations of the braid
group.
The Yang--Baxter Equation
\begin{equation}\label{e:YBequ}
  (\bM\ot\bI)(\bI\ot\bM)(\bM\ot\bI) = (\bI\ot\bM)(\bM\ot\bI)(\bI\ot\bM)
\end{equation}
is the image of this relation in the matrix representation.
Such a matrix \(\bM\) is called an \emph{\(sR\)-matrix}.
Solutions of the Yang--Baxter equation may be obtained through \emph{Ribbon Hopf Algebras}.
In general, such algebras are difficult to construct.
The remarkable work of Drinfel'd and Jimbo in the late 1980's showed that every
semisimple Lie algebra over \(\bbC\) gives rise to such an algebra, the
starting point of which is the \emph{quantized universal enveloping algebra} of
a semisimple Lie algebra.
Consequently, many new knot invariants, generally contained in the class of
\emph{quantum invariants}, were discovered.
Readers interested in reading further about the connexions with knot theory are
referred to~\cite{oh}.

The three algebras which will be encountered are:

\begin{enumerate}
\item  the \emph{quantized universal enveloping algebra} \(\Uhsl{2}\) of the
Lie algebra \(\fsl_2\); this is a Hopf algebra:

\item a \emph{quasi-triangular Hopf algebra};
this is a Hopf algebra with an invertible element \(\sR\) called a universal
\(\sR\)-matrix;

\item a \emph{ribbon Hopf algebra};
this is a quasi-triangular Hopf algebra with a particular element $\bv$ called a \emph{ribbon element} (determined from \(\sR\)).
\end{enumerate}

\subsection{Purpose}
We include a self-contained introduction to quantized universal enveloping
algebras of semisimple Lie groups from a particular perspective: namely, that
they contain a rich combinatorial structure which may be used as a guide to
highly intricate algebraic calculations within these algebras.
We demonstrate the efficacy of this approach by deriving several fundamental
results that may be found in~\cite{cp,k,oh} and original sources such
as~\cite{d3,rt1,rt2,bu,kr90,ls91}.
These results include:
\begin{itemize}
\item [--] straightening in \(\Uhsl{2}\), but from a constructive approach;
  \hfill (\S\ref{S:straighten.commutator})

\item[--]
  a direct derivation of an \(\sR\)-matrix for \(\Uhsl{2}\) without recourse to Drinfel'd's \\
  quantum double or to the quantum Weyl group;
  \hfill (\S\ref{SS:ConstRmorph})

\item[--] the same for \(\Uhsl{n+1}\), \(n \geq 2\);
 \hfill (\S\S\ref{RmatrixSLN1}-\ref{RmatrixSLN2})

\item[--]
  a direct, essentially combinatorial, construction of the ribbon Hopf structure \\
  on \(\Uhsl{2}\);
  \hfill (\S\ref{S:ribbon}).
\end{itemize}

After completing  this investigation, it came to our attention that the
article~\cite{kt91} studied universal enveloping algebras and a universal
\(\sR\)-matrix for quantized super algebras.
They did so through the combinatorics of root systems.
While there are some similarities, our approach is through the combinatorics of
straightening and the combinatorics of \(q\)-series.

%%%%%%%%%%%%%%%%%%%%%%%%%%%%%%

\subsection{Organization}
This article is organized as follows:

In \S\ref{S:QUEA}, we discuss straightening in \(\Uhsl{2}\) and establish the
technical lemmas which are crucial to all that follows.
These are applied to straighten the monomial \(\X^a\Y^b\) in \(\Uhsl{2}\) so
that it is a sum of monomials of the form \(\Y^c\X^d\).
We also discuss some \(q\)-identities in the combinatorial context of
inversions in bimodal permutations, and then prove an extension of a classic
identity of Cauchy that is crucial to our approach to the construction of the
ribbon Hopf structure on \(\Uhsl{2}\).

In \S\ref{S:Rmorphism}, we constructively derive an \(\sR\)-matrix for
\(\Uhsl{2}\).

In \S\ref{S:ribbon}, we give an explicit construction for ribbon element in
\(\Uhsl{2}\).

In \S\ref{S:Uhsln}, we extend the straightening framework developed for
\(\Uhsl{2}\) to \(\Uhsl{n+1}\) for \(n \geq 2\).
These higher dimensional studies further clarify, and amplify, the essential
features of our technique.

In \S\ref{S:Uhsl3-coeffs} we derive a \(\sR\)-matrix for \(\Uhsl{n+1}\).

\subsubsection*{Acknowledgements}
DMJ would like to thank Pavel Etingof of useful discussions.
We wish to thank an anonymous referee for most valuable suggestions, and an
assiduous reading of the paper.
RC and DMJ were supported by The Natural Sciences and Engineering Research
Council of Canada.

\section{Straightening in the Quantized Universal Enveloping Algebra of \texorpdfstring{$\fsl_2$}{sl2}}\label{S:QUEA}
In this Section, we develop a framework for performing \emph{straightening}
computations in the Drinfel'd--Jimbo quantized universal enveloping algebra
\(\Uhsltwo\) for the Lie algebra \(\fsl_2\).
To begin, recall that \(\Uhsltwo\) is the associative \(\bbC[\![h]\!]\)-algebra
with underlying \(\bbC[\![h]\!]\)-module \(\cU(\fsl_2)[\![h]\!]\) formal power
series in \(h\) with coefficients in the universal enveloping algebra
\(\cU(\fsl_2)\) of the Lie algebra \(\fsl_2\), and product determined by the
following degree 2 relations in the generators \(\X,\Y,\H \in \Uhsl{2}\):
\[
  \H\X - \X\H = 2\X, \quad
  \H\Y - \Y\H = -2\Y, \quad
  \X\Y - \Y\X = \frac{e^{\frac{h}{2}\H} - e^{-\frac{h}{2}\H}}{e^{\frac{h}{2}} - e^{-\frac{h}{2}}}.
\]
Our point of departure is to rearrange these relations into the form
\begin{equation}\label{D:QhUSL.rels}
  \X\H = \H\X - 2\X, \quad
  \Y\H = \H\Y + 2\Y, \quad
  \X\Y = \Y\X + \frac{e^{\frac{h}{2}\H} - e^{-\frac{h}{2}\H}}{e^{\frac{h}{2}} - e^{-\frac{h}{2}}},
\end{equation}
so that they may be interpreted as a means for transforming arbitrary monomials
in \(\X\), \(\Y\), and \(\H\) into sums of monomials in which the generators
are ordered \(\H \prec \Y \prec \X\).
This is made precise by the Poincar\'e--Birkhoff--Witt Theorem, which says that
the set
\[ \cB_h \coloneqq \{\H^r\Y^s\X^t : a,b,c \in \bbZ_{\geq 0}\} \]
is a \(\bbC[\![h]\!]\)-basis for \(\Uhsl{2}\).
See~\cite[p.199]{cp} for a proof.

The Poincar\'e--Birkhoff--Witt Theorem allows us to make explicit comparisons
between elements of \(\Uhsl{2}\) by expressing elements in terms of the basis
\(\cB_h\) and then by comparing the coefficients of corresponding basis elements.
The process of writing an element of \(\Uhsl{2}\) in terms of the basis
\(\cB_h\) is referred to as \emph{straightening}.
Much of our work in this article is to codify straightening in quantized
universal enveloping algebras, to show how a robust formalism for straightening
can be used to perform detailed calculations, and also to show how to build
such a framework from a constructive point of view.

\subsection{Notation}\label{SS:quantum-notation}
We use the following notation throughout the article:
\begin{equation}\label{e:NTN:qKqb}
  q      \coloneqq e^{\frac{h}{2}}, \quad
  \K     \coloneqq e^{\frac{h}{4}\H}, \quad
  \qb    \coloneqq q^{-1}, \quad
  \Kb    \coloneqq \K ^{-1}, \quad
  [\H+n] \coloneqq \frac{q^n \K ^2 - \qb^n \Kb^2}{q -\qb}.
\end{equation}
With this notation, the commutation relation for \(\X\) and \(\Y\)
from~\eqref{D:QhUSL.rels} is simply expressed as
\begin{equation}\label{e:XYH}
  \big[\X,\Y\big] \coloneqq \X\Y - \Y\X = \frac{\K^2 - \Kb^2}{q - \qb} = [\H].
\end{equation}
For integers \(n\) and \(k\), write
\[
  [n]_q           \coloneqq \frac{q^n - \qb^n}{q-\qb}, \quad
  [n]!_q          \coloneqq  \prod_{i = 1}^n [i]_q, \quad
  \Qbinom{n}{k}_q \coloneqq \frac{[n]!_q}{[k]!_q \, [n-k]!_q}, \quad
  [n]_{q,i} \coloneqq \frac{[n]!_q}{[n-i]!_q},
\]
for the \emph{quantum integer} \(n\), the \emph{quantum factorial function},
the \emph{quantum binomial function}, and the \emph{quantum lower factorial}
respectively, where \(k\) is a non-negative integer.
Note that subscript \(q\) in each notation is to be thought of as the argument
in its definition.
That is, for any function \(f(q)\) of \(q\), we write
\begin{align*}
  [n]_{f(q)} \coloneqq \frac{f(q)^n - f(\qb)^n}{f(q)-f(\qb)},
\end{align*}
and similarly with the other definitions.
When an explicit subscript is omitted, \(f(q) = q\) by convention.

Finally, for non-negative integers \(i\),
\begin{equation}\label{e:BinomH}
  [\H + n]_{(i)} \coloneqq \prod_{r=0}^{i-1} [\H +n - r],
  \qquad\mbox{and}\qquad
  \Qbinom{\H + n}{i} \coloneqq \frac{1}{[i]!} [\H + n]_{(i)}.
\end{equation}

\subsection{Basic Straightening Rules}\label{SS:Uhsl2Straightening}
The proofs of the following are straightforward and are largely omitted.

\begin{lemma}[Separation Lemma]\label{L:xya:qId}
Let \(x\), \(y\), and \(a\) be indeterminates.
Then
\begin{enumerate}
  \item \label{L:xya:qId.i}   \([-x] = -[x]\),
  \item \label{L:xya:qId.ii}  \([x] \, [y]  - [x-a] \, [y+a] = [a]\,[y-x+a]\),
  \item \label{L:xya:qId.iii} \([x] \, [y]  +  [a]\,[x+y+a] = [x+a]\,[y+a]\).
\end{enumerate}
\end{lemma}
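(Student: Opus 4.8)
The plan is to reduce all three identities to elementary Laurent‑polynomial manipulations. The key observation is that for any \(\bbZ\)-linear expression \(z\) in the available symbols, the bracket is to be read as
\[
  [z] = \frac{q^{z} - \qb^{z}}{q - \qb},
\]
where \(q^{z}\) denotes the formal exponential, so that \(q^{z}q^{w} = q^{z+w}\), \(\qb^{z} = q^{-z}\), and (consistently with~\eqref{e:NTN:qKqb}) \(q^{\H} = \K^{2}\). With this reading, each of (i)--(iii) becomes an identity between elements of the Laurent polynomial ring \(\bbC[\![h]\!][q^{\pm x}, q^{\pm y}, q^{\pm a}]\) localized at \(q - \qb\), and the substitutions \(x \mapsto -x\), \(a \mapsto -a\), etc.\ used below are legitimate ring homomorphisms of that ring.

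First, (i) is immediate from the definition: \([-x] = (\qb^{x} - q^{x})/(q - \qb) = -[x]\).

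Next I would prove (ii) by clearing denominators. Multiplying the left‑hand side by \((q - \qb)^{2}\) yields
\[
  (q^{x} - \qb^{x})(q^{y} - \qb^{y}) - (q^{x-a} - \qb^{x-a})(q^{y+a} - \qb^{y+a});
\]
expanding both products into four monomials each, the terms \(q^{x+y}\) and \(\qb^{x+y}\) cancel between the two, and the surviving four terms regroup (factoring out \(q^{\pm(x-y-a)}\)) as \((q^{a} - \qb^{a})(q^{y-x+a} - \qb^{y-x+a})\), which is exactly \((q - \qb)^{2}\,[a]\,[y - x + a]\). This is the one genuine computation in the lemma, and it runs to a couple of lines.

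Finally, (iii) can be obtained from (ii) and (i) with no further expansion: apply (ii) with \(y\) replaced by \(-y\), and use \([-y] = -[y]\) together with \([-(y-a)] = -[y-a]\) and \([-(x+y-a)] = -[x+y-a]\) to get \([x][y] - [x-a][y-a] = [a][x + y - a]\); then replace \(a\) by \(-a\) and apply (i) once more to reach \([x][y] + [a][x + y + a] = [x + a][y + a]\). (Alternatively, (iii) succumbs to the same clear‑and‑expand argument as (ii).) I do not anticipate any real obstacle: the only points requiring care are the consistency of the formal‑exponential interpretation of \([\,\cdot\,]\) and the sign bookkeeping when variables are negated.
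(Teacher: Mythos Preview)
Your argument is correct and follows essentially the same route as the paper: expand \((q-\qb)^2[x][y]\) into exponentials to verify (ii), then deduce (iii) from (ii) by a sign substitution using (i). The only difference is cosmetic: the paper obtains (iii) from (ii) by the single substitution \(x \mapsto -x\), whereas you go via \(y \mapsto -y\) followed by \(a \mapsto -a\); both work, but the former is one step shorter.
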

\begin{proof}
Identity~\ref{L:xya:qId.ii} comes by noting
\((q - \qb)^2\, [x] \, [y] =
  \big(q^{x+y} +\qb^{x+y}\big) - \big(q^{y-x} +\qb^{y-x}\big)\).
Applying this twice,
\[
  (q - \qb)^2\, \big( [x] \, [y] -  [x-a] \, [y+a] \big)
    = q^{y-x} \big(q^{2a} - 1\big) + \qb^{y-x} \big(\qb^{2a} - 1\big).
\]
Finally,~\ref{L:xya:qId.iii} follows from~\ref{L:xya:qId.ii} upon replacing
\(x\) by \(-x\).
\end{proof}

\begin{remark}
These identities are valid when either:
\begin{enumerate}
  \item[(a)] both \(x\) and \(y\) are integers; or
  \item[(b)] one of \(x\) or \(y\) is an integer and the other is an
    expression involving \(\H\).
\end{enumerate}
Lemma~\ref{L:xya:qId}\ref{L:xya:qId.ii} may be viewed as a device for
separating the \(x\) and \(y\) in \([y-x-a]\).
\end{remark}

\begin{lemma}[Straightening]\label{L:calcUhSL2}
Let \(a \in \bbZ\), \(b, c \in \bbZ_{\geq 0}\), and \(f(x)\) be a formal power
series in \(x\).
Then the following hold in \(\Uhsltwo\).
\begin{align}
  \label{L:calcUhSL2.ii}\tag{i}
    \X^b\, f(\H) & = f(\H-2b)\X^b,
      &
    \Y^b\, f(\H) & = f(\H+2b)\Y^b, \\
  \label{L:calcUhSL2.iii}\tag{ii}
    \X^b\K^a & = \qb^{ab}\K^a\X^b,
      &
    \Y^b \K^a & = q^{ab} \K^a \Y^b, \\
  \label{L:calcUhSL2.iv}\tag{iii}
    (\K\X)^b & = \qb^{\frac{1}{2}b(b-1)} \K^b \X^b,
      &
    (\K\Y)^b  & = q^{\frac{1}{2}b(b-1)} \K^b\Y^b, \\
  \label{L:calcUhSL2.v}\tag{iv}
    \X\,\Y^b  & = \Y^b\X + [b]\,[\H+b-1]\,\Y^{b-1},
      &
    \X^b\Y & = \Y\,\X^b + [b]\,[\H-b+1]\,\X^{b-1}, \\
  \label{L:calcUhSL2.vi}\tag{v}
    (\K\X)^b f(\H) & = f(\H-2b)\:(\K\X)^b,
      &
    (\Kb\Y)^b f(\H) & = f(\H+2b) \: (\Kb\Y)^b, \\
  \label{L:calcUhSL2.vii}\tag{vi}
    \X^b(\K\X)^c & = \qb^{\frac{1}{2}(c^2+2bc-c)} \K^c \X^{b+c},
      &
    \Y^b(\K\Y)^c & = q^{\frac{1}{2}(c^2+2bc-c)} \K^c \Y^{b+c}.
\end{align}
\end{lemma}
\begin{proof}
  We only prove~\eqref{L:calcUhSL2.v}.
  Let \(A_b \coloneqq \X\Y^b\).
  Then, from~\eqref{e:XYH} and part~\eqref{L:calcUhSL2.ii},
  \begin{equation}\label{Aby}
    A_b = (\X\Y)\Y^{b-1} = \Y A_{b-1} + [\H]\Y^{b-1}.
  \end{equation}
  Iterating this gives
  \begin{equation}\label{Abfbh}
    A_b = \Y^b\X + f_b(\H) \Y^{b-1} \quad\mbox{where}\quad f_0(\H)=0
  \end{equation}
  which, when substituted into~(\ref{Aby}) and part~\eqref{L:calcUhSL2.ii} is applied,
  gives \(f_b(\H) = f_{b-1}(\H+2) + [\H]\). Thus
  \[f_b(\H) = \sum_{i = 0}^{b - 1} [\H + 2i] = f_{b-1}(\H) + [\H+2b-2].\]
  Lemma~\ref{L:xya:qId}\ref{L:xya:qId.iii} with \(a = 1\), \(x = b - 1\), and
  \(y =\H + b - 2\) gives \([\H+2b-2] = [b]\,[\H+b-1] - [b-1]\,[\H+b-2]\), so
  \[f_b(\H) - [b]\,[\H+b-1] = f_{b-1}(\H) - [b-1]\,[\H+b-2] = c \]
  where \(c\) is therefore independent of \(b\).
  Setting \(b=0\) in the left hand side gives \(c = f_0(\H) = 0\),
  so \(f_b(\H) = [b]\,[\H+b-1]\) and the result follows from~\eqref{Abfbh}.
\end{proof}

When constructing an \(\sR\)-matrix in \S\ref{SS:ConstRmorph}, a term
\(e^{\frac{h}{4}\H \otimes \H}\) will appear.
The following result will be useful for commuting terms past this exponential.
\begin{lemma}\label{L:1otX}
Let \(f(x)\) be a formal power series in \(x\) and \(m \in \mathbb{Z}\). Then
\begin{enumerate}
  \item \label{L:1otX.i} \(f(1\otimes \K^m\X) \ehHtH =  \ehHtH f(\Kb^2\ot \K^m\X)\); and
  \item \label{L:1otX.ii}\(f(1\otimes \K^m\Y) \ehHtH =  \ehHtH f(\K^2\ot \K^m\Y)\).
\end{enumerate}
\end{lemma}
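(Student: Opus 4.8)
The plan is to prove part~\ref{L:1otX.i} first and then observe that part~\ref{L:1otX.ii} follows by an entirely parallel argument (with the sign in the exponents reversed). Since both sides of~\ref{L:1otX.i} are continuous in the $h$-adic topology and $f$ is a formal power series, it suffices to verify the identity when $f$ is the monomial $f(x)=x^b$, $b\in\bbZ_{\geq 0}$, and then extend by linearity and $h$-adic continuity. So the core claim reduces to
\[
  (1\ot \K^m\X)^b\,\ehHtH \;=\; \ehHtH\,(\Kb^2\ot \K^m\X)^b .
\]

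First I would handle the base case $b=1$. The left side is $(1\ot\K^m\X)\,\ehHtH$. Working in $\Uhsltwo\ot\Uhsltwo$, expand $\ehHtH=\sum_{k\geq 0}\tfrac{1}{k!}\bigl(\tfrac h4\bigr)^k\H^k\ot\H^k$ and commute $1\ot\K^m\X$ to the right through each factor $\H^k$ in the second tensor slot. By Lemma~\ref{L:calcUhSL2}\eqref{L:calcUhSL2.ii} applied in the second factor, $\H^k(\K^m\X) = (\K^m\X)(\H+2)^k$ — here I use that $\K^m\X = e^{\frac{mh}{4}\H}\X$ and that $f(\H)\X = \X f(\H+2)$ for the power series $f(\H)=e^{\frac{mh}{4}\H}$, which is the content of~\eqref{L:calcUhSL2.ii}. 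Reassembling the sum, $(1\ot\K^m\X)\,\ehHtH = \bigl(\sum_k \tfrac1{k!}(\tfrac h4)^k\,\H^k\ot(\K^m\X)(\H+2)^k\bigr)$, and pulling $1\ot\K^m\X$ back out on the left gives $e^{\frac h4\,\H\ot(\H+2)}\,(1\ot\K^m\X)$. Now $e^{\frac h4\,\H\ot(\H+2)} = e^{\frac h4\H\ot\H}\cdot e^{\frac h2\H\ot 1} = \ehHtH\,(\Kb^2\ot 1)$, using $\K^{-2}=e^{-\frac h2\H}$ and the fact that $\H\ot\H$ and $\H\ot 1$ commute. Hence $(1\ot\K^m\X)\,\ehHtH = \ehHtH\,(\Kb^2\ot\K^m\X)$, which is the $b=1$ case.

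For general $b$ I would induct: assume $(1\ot\K^m\X)^{b-1}\ehHtH = \ehHtH(\Kb^2\ot\K^m\X)^{b-1}$. Then
\[
  (1\ot\K^m\X)^b\,\ehHtH
   = (1\ot\K^m\X)\,\ehHtH\,(\Kb^2\ot\K^m\X)^{b-1}
   = \ehHtH\,(\Kb^2\ot\K^m\X)\,(\Kb^2\ot\K^m\X)^{b-1},
\]
using the $b=1$ case on the middle factor; this is $\ehHtH(\Kb^2\ot\K^m\X)^b$ as desired. The one genuine subtlety — the "hard part," such as it is — is making the interchange of the infinite sum defining $\ehHtH$ with the (finite) algebraic manipulations rigorous: this is where one invokes that $\ehHtH$ is a well-defined element of the $h$-adically completed tensor product and that multiplication is $h$-adically continuous, so that term-by-term commutation is legitimate. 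Everything else is bookkeeping with $\K=e^{\frac h4\H}$ and the commutation rule~\eqref{L:calcUhSL2.ii}. Finally, part~\ref{L:1otX.ii} is proved identically, replacing $\X$ by $\Y$ and using the second identity in~\eqref{L:calcUhSL2.ii}, namely $f(\H)\Y = \Y f(\H-2)$; this flips the relevant exponent, producing $e^{\frac h4\,\H\ot(\H-2)} = \ehHtH\,(\K^2\ot 1)$ and hence the factor $\K^2\ot\K^m\Y$ on the right.
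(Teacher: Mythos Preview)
The paper does not supply a proof of this lemma; it is stated as one of the ``straightforward'' straightening rules following Lemma~\ref{L:calcUhSL2}. Your overall strategy --- reduce to monomials $f(x)=x^b$, handle $b=1$ by commuting through the exponential, then induct on $b$ --- is exactly the natural one and is correct in outline.

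There is, however, a pair of compensating sign slips in your $b=1$ computation. You are moving $1\ot\K^m\X$ from the \emph{left} of $\ehHtH$ to the \emph{right}, so in the second tensor slot you need $(\K^m\X)\H^k$, not $\H^k(\K^m\X)$. Since $\X\H=(\H-2)\X$, one has $(\K^m\X)\H^k=(\H-2)^k(\K^m\X)$, and hence
\[
  (1\ot\K^m\X)\,\ehHtH \;=\; e^{\frac{h}{4}\H\ot(\H-2)}\,(1\ot\K^m\X),
\]
with $\H-2$, not $\H+2$. (The identity $\H^k(\K^m\X)=(\K^m\X)(\H+2)^k$ that you quote is correct but is the commutation in the opposite direction.) You then identify $e^{\frac{h}{2}\H\ot 1}$ with $\Kb^2\ot 1$, whereas $\K=e^{\frac{h}{4}\H}$ gives $e^{\frac{h}{2}\H}=\K^2$; it is $e^{-\frac{h}{2}\H\ot 1}$ that equals $\Kb^2\ot 1$. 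These two sign errors cancel, so your final conclusion $(1\ot\K^m\X)\ehHtH=\ehHtH(\Kb^2\ot\K^m\X)$ is correct. The same double slip recurs in your sketch of part~\ref{L:1otX.ii}: the correct intermediate is $e^{\frac{h}{4}\H\ot(\H+2)}=\ehHtH\,(\K^2\ot 1)$. Once these signs are fixed, your argument is complete.
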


\subsection{Straightening of \texorpdfstring{$\X^a\Y^b$}{XaYb}}\label{S:straighten.commutator}
We now straighten \(\X^a\Y^b\), \(a,b \in \bbZ_{\geq 0}\), with respect to the
ordering \(\H \prec \Y \prec \X\) by a constructive method.
From Lemma~\ref{L:calcUhSL2}\eqref{L:calcUhSL2.v}, the straightening of the
premultiplication of \(\Y^b\) by \(\X\) is
\[\X \,\Y ^b = \Y ^b \X  + [b]\, [\H +b-1]\, \Y ^{b-1}.\]
Iterating this \(a\) times, and noting that \(\X\) may be moved through quantum
brackets containing only \(\H\) by means of
Lemma~\ref{L:calcUhSL2}\eqref{L:calcUhSL2.ii},
\begin{equation}\label{F1}
  \X^a\Y^b = \sum_{0 \leq i,j \leq \min(a,b)} F_{a,b,i,j}(\H)\, \Y^i \X^j.
\end{equation}
But \(e^\H\,\X^a\Y^b = \X^a\Y^b\, e^{\H+2a-2b}\) by
Lemma~\ref{L:calcUhSL2}\eqref{L:calcUhSL2.ii} and so, applying this
to~(\ref{F1}), gives
\[
  \X^a\Y^b e^{\H+2a-2b} =
    \sum_{0 \leq i,j \leq \min(a,b)} F_{a,b,i,j}(\H)\, \Y^i \X^j e^{\H+2j-2i},
\]
and so
\[
  \X^a\Y^b =
    \sum_{0 \leq i,j \leq \min(a,b)} F_{a,b,i,j}(\H)\, \Y^i \X^j e^{2(j-a)-2(i-b)}.
\]
Equating coefficients of \(\Y^i \X^j\) on the right hand side of this
and~\eqref{F1}, we have \(i=b-k\) and \(j=a-k\) for some non-negative integer
\(k\), whence we conclude that
\begin{equation}\label{F2}
  \X^a\Y^b = \sum_{0\le k \le \min(a,b)} G_{a,b,k}(\H )\, \Y^{b-k}\X^{a-k}
\end{equation}
where \(G_{a,b,k}(\H) \coloneqq F_{a,b,a-k,b-k}(\H)\). Since commuting \(\X\)
from the left of \(\Y^b\) yields a single term of top degree with coefficient
\(1\), the boundary condition is
\begin{equation}\label{F3}
  G_{a,b,0}(\H) = 1.
\end{equation}

A recursion for \(G_{a,b,k}\) is obtained from the identity
\(\X^a\Y^b= \X ^{a-1} \left(\X \, \Y ^b\right)\).
First, from Lemma~\ref{L:calcUhSL2}\eqref{L:calcUhSL2.ii}
and~\eqref{L:calcUhSL2.v},
\[
  \X^a\Y^b =
    \big(\X^{a-1}\Y^{b-1}\big)(\Y\X) + [b]\,[\H+b-2a+1]\,\big(\X^{a-1}\Y^{b-1}\big).
\]
Then, substituting~\eqref{F2} into this,
\[
  \sum_{k \geq 0} G_{a,b,k} \Y^{b-k} \X^{a-k} =
    \sum_{k \geq 0} G_{a-1,b-1,k}\big(\Y^{b-k-1} \X^{a-k-1}(\Y\X) + [b]\, [\H +b-2a+1] \Y^{b-k-1}\X^{a-k-1}\big).
\]
From Lemma~\ref{L:calcUhSL2}\eqref{L:calcUhSL2.v},
\(\X^{a-k-1}\Y = \Y\,\X^{a-k-1} + [a-k-1] \, [\H-a+k+2] \, \X^{a-k-2}\).
Substituting this into the above, and then equating the coefficients of
\(\Y^{b-k}\X^{a-k}\) gives the recurrence equation
\[ G_{a,b,k} = G_{a-1,b-1,k} + \big([a-k] \,  [\H +2b-a-k+1]+ [b] \,  [\H +b-2a+1]\big) G_{a-1,b-1,k-1}. \]
Then from Lemma~\ref{L:xya:qId}\ref{L:xya:qId.iii}, with \(x\mapsto b\),
\(y \mapsto \H +b-2a+1\) and \(a \mapsto a-k\),
\begin{equation}\label{F4}
  G_{a,b,k} = G_{a-1,b-1,k} + [a+b-k] \, [\H +b-a-k+1]\,G_{a-1,b-1,k-1}.
\end{equation}

Each instance of \(G_{i,j,\cdot}\) in this recurrence equation satisfies \(i-j =a-b\).
The only term that does not contain \(a-b\) is \([a+b-k]\).
This suggests using \([k]\, [a+b-k] =  [a]\, [b] - [a-k]\, [b-k]\) from the
Separation Lemma~\ref{L:xya:qId} to separate \(a\) and \(b\) in this quantum
bracket and then transforming \(G_{a,b,k}\) to form a new recurrence equation
in which \(a-b\) is an invariant.
Let
\begin{equation}\label{e:D}
  G _{a,b,k} \eqqcolon \frac{[a]_k \, [b]_k}{[k]!} \, B_{a,b,k}.
\end{equation}
Then, substituting~\eqref{e:D} into~\eqref{F4} gives
\begin{equation}\label{F6}
  [a]\,[b]\,B_{a,b,k} - [a-k]\,[b-k]\,B_{a-1,b-1,k} \\
    = \big([a]\,[b] - [a-k]\,[b-k]\big) \, [\H +b-a-k+1]\,B_{a-1,b-1,k-1}.
\end{equation}
Suppose that \(B_{a,b,k}\) depends only on the difference \(b - a\).
Then \(B_{a,b,k} = B_{a-1,b-1,k}\) and~(\ref{F6}) becomes
\[ B_{a,b,k} = [\H +b-a-k+1]\, B_{a,b,k-1} \]
for \(k \ge 1\) and \(B_{a,b,0} = 1\) from~\eqref{F3}. This suggests the
solution \(B_{a,b,k} = [\H +b-a]_k\). Indeed,
it is readily checked that this does indeed satisfy~\eqref{F6}, from which
we have
\[ G_{a,b,k} = [a]_k \, [b]_k \Qbinom{\H +b-a}{k}. \]
So, from~(\ref{F2}),  we have therefore (both derived and) proved the following:

\begin{lemma}\label{L:XaYbHq}
  Let \(a\) and \(b\) be non-negative integers. Then
  \[
    \Wterm{\X}{a} \, \Wterm{\Y}{b}
      = \sum_{i\ge0} \Qbinom{\H +b-a}{i} \Wterm{\Y}{b-i} \, \Wterm{\X}{a-i}.
  \]
  In particular, when \(a = b\),
  \[ \X^n \Y^n = \sum_{i=0}^n \, [n]_i^2  \Qbinom{\H }{i}  \,  \Y^{n-i}\X^{n-i}. \]
\end{lemma}

\subsection{Straightening $q$-commuting variables}\label{S:inversions}
Indeterminates \(a\) and \(b\) are said to \emph{\(q\)-commute} if \(ba = q\, ab\).
Certain combinations of elements in \(\Uhsl{2}\) \(q\)-commute and so we will
find use for straightening rules involving series in \(q\)-commuting variables.
Here, we collect straightening rules involving abstract \(q\)-commuting variables.
We view these identities as arising from combinatorial properties of inversions
in permutations.

For \(n,k \in \bbZ\), define the \emph{\(q\)-integer \(n\)}, the \emph{\(q\)-factorial}
of \(n\) and the \emph{\(q\)-binomial coefficient} of \(n\) and \(k\) as
\begin{equation}\label{eq:q-functions}
  (n)_q          \coloneqq \frac{1-q^n}{1-q}, \quad
  (n)!_q         \coloneqq (1)_q \cdot (2)_q \cdots  (n)_q, \quad
  \binom{n}{k}_q \coloneqq\frac{(n)!_q}{(k)!_q\, (n-k)!_q},
\end{equation}
respectively.
The \(q\)-\emph{exponential series} is
\[ \exp_q(x) \coloneqq \sum_{n\ge0} \frac{x^n}{(n)!_q} \in \bbQ(q)\, [\![x]\!] \]
as a formal power series in \(x\) with coefficients that are rational functions
of \(q\).

The \(q\)-exponential series has a multiplicative property for \(q\)-commuting
indeterminates.
The proof uses the observation that \(q\) is associated with a combinatorial
property of sets, as follows.
An \emph{ordered bipartition} \(\{1,\ldots,n\}\) of \emph{type} \((r,n-r)\) is
\((\alpha,\beta)\), where \(\alpha\) and \(\beta\) are disjoint subsets of
\(\{1,\ldots,n\}\) of size \(r\) and \(n-r\), respectively.
A \emph{between-set inversion} of \((\alpha,\beta)\) is a pair
\((i,j) \in \alpha\times \beta\) such that  \(i>j\).
An \emph{inversion} in a permutation \(\pi\in\fS_n\) is a pair \((i,j)\) with
\(1\le i < j \le n\) such that \(\pi(i)>j\).

\begin{lemma}\label{L:ExpqProd}
  Let \(a,b\) be such that \(ba = q\, ab\).
  Then \(\exp_{q}(a+b) = \exp_{q}(a)\exp_{q}(b)\).
\end{lemma}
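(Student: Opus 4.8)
The plan is to prove the identity by comparing coefficients of a fixed monomial on both sides, after first establishing the natural normal-ordering rule for products of $q$-commuting variables. The key fact is that if $ab = q\,ba$, then expanding the (noncommutative) product $(a+b)^n$ and collecting all terms into the form $a^r b^{n-r}$ produces exactly the Gaussian binomial coefficient:
\[
  (a+b)^n = \sum_{r=0}^{n} \binom{n}{r}_q \, a^r b^{n-r}.
\]
This is the $q$-binomial theorem in its combinatorial guise, and it is where the between-set inversion language in the text is meant to be used: a word in $\{a,b\}$ of length $n$ with $r$ letters equal to $a$ is an ordered bipartition $(\alpha,\beta)$ of $\{1,\dots,n\}$ of type $(r,n-r)$, where $\alpha$ records the positions of the $a$'s; sorting that word to the normal form $a^r b^{n-r}$ by repeatedly applying $ab = q\,ba$ costs exactly one factor of $q$ per between-set inversion, i.e.\ per pair $(i,j)\in\alpha\times\beta$ with $i>j$. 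Hence the coefficient of $a^r b^{n-r}$ is $\sum_{(\alpha,\beta)} q^{\mathrm{inv}(\alpha,\beta)}$, and the standard bijection between ordered bipartitions of type $(r,n-r)$ and lattice paths (equivalently, the generating-function identity $\sum q^{\mathrm{inv}(\alpha,\beta)} = \binom{n}{r}_q$, provable by the usual recursion on whether $n\in\alpha$ or $n\in\beta$) gives the claimed coefficient.

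With this in hand, I would expand the left-hand side directly:
\[
  \exp_q(a+b) = \sum_{n\ge 0} \frac{(a+b)^n}{(n)!_q}
             = \sum_{n\ge 0} \frac{1}{(n)!_q}\sum_{r=0}^n \binom{n}{r}_q a^r b^{n-r}
             = \sum_{n\ge 0}\sum_{r=0}^n \frac{a^r b^{n-r}}{(r)!_q\,(n-r)!_q},
\]
using $\binom{n}{r}_q/(n)!_q = 1/\big((r)!_q(n-r)!_q\big)$. Reindexing with $j = r$, $k = n-r$ turns this into $\sum_{j,k\ge 0} \frac{a^j}{(j)!_q}\cdot\frac{b^k}{(k)!_q} = \exp_q(a)\exp_q(b)$, which is the desired identity. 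The reindexing is legitimate because we are working in the formal power series ring in the (graded) variables $a,b$, so only finitely many terms contribute in each total degree and rearrangement is harmless.

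The main obstacle is the combinatorial lemma $\sum_{(\alpha,\beta)} q^{\mathrm{inv}(\alpha,\beta)} = \binom{n}{r}_q$, i.e.\ checking carefully that sorting a $\{a,b\}$-word to normal form accrues precisely $q$ to the between-set inversion number, with no double counting and no dependence on the order in which the transpositions $ab\mapsto q\,ba$ are applied. The cleanest route is to avoid picking an explicit sorting procedure: instead prove the recursion $\binom{n}{r}_q = \binom{n-1}{r-1}_q + q^{r}\binom{n-1}{r}_q$ holds for $c_{n,r} \coloneqq \sum_{(\alpha,\beta)\text{ of type }(r,n-r)} q^{\mathrm{inv}(\alpha,\beta)}$ by conditioning on whether the element $n$ lies in $\alpha$ (contributing $r$ extra inversions, one against each of the $r$ elements eventually... — more precisely, $n$ in $\alpha$ is larger than all $n-r$ elements of $\beta$, giving $q^{n-r}$; a symmetric bookkeeping handles $n\in\beta$), together with the base cases $c_{n,0} = c_{n,n} = 1$. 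Matching this against the well-known Pascal-type recursion for $\binom{n}{r}_q$ finishes it. Since everything else is formal manipulation of convergent (degreewise-finite) power series, this combinatorial identity is the only genuine content.
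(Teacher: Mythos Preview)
Your proof is correct and follows the same overall scheme as the paper: interpret the coefficient of $a^r b^{n-r}$ in the expansion of $(a+b)^n$ as the generating function for between-set inversions over ordered bipartitions of type $(r,n-r)$, identify that generating function with $\binom{n}{r}_q$, and then expand $\exp_q(a+b)$ termwise and reindex.

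The one substantive difference is in how you establish $f_{r,n-r}(q) = \binom{n}{r}_q$. The paper does this multiplicatively: it computes the inversion generating function $g_n(q)$ for all of $\mathfrak{S}_n$ in two ways --- once by the standard recursion on the position of $n$, giving $g_n(q) = (n)!_q$, and once by decomposing the inversions of a permutation into those within $\alpha$, those within $\beta$, and those between, giving $(n)!_q = (r)!_q\,(n-r)!_q\, f_{r,n-r}(q)$. You instead argue additively, via a Pascal-type recursion on $c_{n,r} = \sum_{(\alpha,\beta)} q^{\mathrm{inv}(\alpha,\beta)}$ obtained by conditioning on the block containing $n$. Both routes are standard; the paper's double-counting is slightly slicker and yields the factorial identity directly, while your recursion is more self-contained and does not invoke permutations at all.

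One small wobble to clean up: in your recursion you first say $n \in \alpha$ contributes $r$ inversions and then correct yourself to $n-r$. The latter is right (if $n \in \alpha$ it forms an inversion with each of the $n-r$ elements of $\beta$; if $n \in \beta$ it forms none), and this matches the recursion $\binom{n}{r}_q = q^{n-r}\binom{n-1}{r-1}_q + \binom{n-1}{r}_q$, not the one you wrote down. Either form of the $q$-Pascal recursion works, but make sure the combinatorics and the algebra agree.
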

\begin{proof}
  The \(q\)-commutation relation gives polynomials \(f_{r,n-r}(q)\)
  such that
  \begin{equation}\label{e:abqn}
    (a+b)^n = \sum_{r=0}^n f_{r,n-r}(q) a^r b^{n-r}.
  \end{equation}
  By construction, the coefficient of \(q^k\) in \(f_{r,n-r}(q)\) is the number
  of ordered bi-partitions of \(\{1,\ldots,n\}\) of type \((r,n-r)\) with
  precisely \(k\) between-set inversions.

  We now relate \(f_{r,n-r}(q)\) with generating series \(g_m(q)\) whose
  \(q^k\) coefficient is the number of elements in \(\fS_m\) with exactly \(k\)
  inversions.
  First, by considering the contribution to inversions by the symbol \(m\),
  we have the recursion
  \[ g_n(q) = g_{n-1}(q) \,(1+q+ \cdots + q^{n-1}) \qquad\mbox{for \(n \geq 1\)}\qquad \]
  with \(g_0(q)=1\), so \(g_n(q) = (n)!_q\).
  On the other hand, by considering a fixed bi-partition \((\alpha,\beta)\) of
  type \((r,n-r)\), we have
  \((n)!_q = g_r(q) \: g_{n-r}(q) \: f_{r,n-r}(q)\)
  since each inversion of \(\pi\) occurs within \(\alpha\), or within
  \(\beta\), or between \(\alpha\) and \(\beta\).
  Then \(f_{r,n-r}(q) = \binom{n}{r}_q\) and the result then follows
  immediately from~\eqref{e:abqn}.
\end{proof}

In constructing the ribbon structure on \(\Uhsl{2}\) in \S\ref{S:ribbon}, we
will need an extension of the following finite product identity due to
Cauchy~\cite{Cau}:

\begin{lemma}\label{L:Cauchy:FinProdId}
  Let \(z\) and \(q\) be indeterminates.
  Then
  \[ z^n = \sum_{k=0}^n \binom{n}{k}_q \prod_{i=0}^{k-1}\, \left(z-q^i\right). \]
\end{lemma}
\begin{proof}
  This can be deduced from the \(q\)-analogue of the Binomial Theorem; see,
  for example,~\cite[Corollary 2.6.11]{blue}.
\end{proof}

We now extend this identity to an identity of certain formal power series
by taking \(z\) to be an exponential related to \(q\).

\begin{lemma}\label{L:CauchyExp}
  Let \(h\), \(t\), and \(x\) be indeterminates such that \(q = e^\frac{h}{2}\).
  Then,
  \[
    e^{xht} = \sum^\infty_{k = 0} \binom{x}{k}_{q^2} \prod^{k - 1}_{i = 0} (e^{ht} - q^{2i})
    \in \bbQ[x,t][\![h]\!].
  \]
\end{lemma}
\begin{proof}
  The coefficient of \(h^m\) on the left is a polynomial in \(t\) and \(x\).
  On the right hand side, note that
  \[
    \val_h\Big(\prod^{k - 1}_{i = 0}(e^{ht} - q^{2i})\Big)
      = \val_h\Big(\prod^{k - 1}_{i = 0}(t - i)h\Big)
      = k,
  \]
  where \(\val_h\) extracts the exponent of the smallest power of \(h\) with
  with nonzero coefficient. So only the finitely many indices \(0 \leq k \leq
    m\) contribute to the coefficient of \(h^m\) and each index contributes a
  binomial coefficient \(\binom{x}{k}_{q^2}\). But, as a power series in \(h\),
  \(\binom{x}{k}_{q^2}\) has coefficients which are polynomial in \(x\).
  Therefore the coefficient of \(h^m\) on the right hand side of the statement
  is polynomial in \(t\) and \(x\).
  In particular, the coefficient of \(h^mt^n\) is a polynomial in \(x\) on both
  sides.
  By Lemma~\ref{L:Cauchy:FinProdId}, these polynomials in \(x\) agree for each
  positive integer and thus they must be equal as polynomials.
\end{proof}

\subsection{The quantum exponential function}
The functions defined in \S\ref{SS:quantum-notation} and \S\ref{S:inversions}
are related through
\begin{equation}\label{e:Qbin:CombBin}
  [n]_q  = \qb^{n-1}(n)_{q^2}, \quad
  [n]!_q = \qb^{\frac{1}{2}n(n-1)} (n)!_{q^2}, \quad
  \Qbinom{n}{k}_q = \qb^{k(n-k)} \binom{n}{k}_{q^2}.
\end{equation}
Note also that the quantum numbers are invariant under the substitution \(q \mapsto \qb\).

The \emph{quantum exponential function} defined by
\begin{equation}\label{e:qExpFn}
  \Exp{q}(x) \coloneqq \sum_{n\ge0} \frac{q^{\frac{1}{2}n(n-1)}}{[n]!_q}x^n
\end{equation}
enjoys an analogous multiplicative property as the \(q\)-exponential series
under quantum commutation.

\begin{lemma}\label{L:qExpFact}
  Let \(a\) and \(b\) be such that \(ba = \qb^2\, ab\).
  Then
  \[ \Exp{q}(a+b) = \Exp{q}(a) \, \Exp{q}(b). \]
\end{lemma}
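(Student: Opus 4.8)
The plan is to reduce the identity to Lemma~\ref{L:ExpqProd} by recognizing $\Exp{q}$ as an ordinary $q$-exponential series in a rescaled base. First I would rewrite the coefficients of $\Exp{q}$ using the conversion identities~\eqref{e:Qbin:CombBin}: since $[n]!_q = \qb^{\frac{1}{2}n(n-1)}(n)!_{q^2}$, the coefficient of $x^n$ in $\Exp{q}(x)$ is
\[
  \frac{q^{\frac{1}{2}n(n-1)}}{[n]!_q} = \frac{q^{\frac{1}{2}n(n-1)}}{\qb^{\frac{1}{2}n(n-1)}\,(n)!_{q^2}} = \frac{q^{n(n-1)}}{(n)!_{q^2}},
\]
and the same rescaling applied to the $q$-factorial with base $\qb^2$ gives $(n)!_{\qb^2} = q^{-n(n-1)}(n)!_{q^2}$, so this coefficient is exactly $1/(n)!_{\qb^2}$. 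Hence $\Exp{q}(x) = \exp_{\qb^2}(x)$ as formal power series.

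Next I would address the commutation relation. Since $\qb = q^{-1}$, the hypothesis $ab = q^2\,ba$ is the same as $ba = \qb^2\,ab$; that is, $a$ and $b$ $q$-commute with parameter $\qb^2$ in exactly the sense required by Lemma~\ref{L:ExpqProd}. Applying that lemma with $q$ replaced by $\qb^2$ yields $\exp_{\qb^2}(a+b) = \exp_{\qb^2}(a)\,\exp_{\qb^2}(b)$, which by the first step is precisely $\Exp{q}(a+b) = \Exp{q}(a)\,\Exp{q}(b)$.

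Alternatively, I could mirror the combinatorial proof of Lemma~\ref{L:ExpqProd} directly. One expands $(a+b)^n = \sum_r c_r\, a^r b^{n-r}$; normal-ordering a word in $a$'s and $b$'s costs one factor $\qb^2$ per between-set inversion, so $c_r = \binom{n}{r}_{\qb^2}$. Substituting into $\Exp{q}(a+b) = \sum_n \frac{q^{\frac{1}{2}n(n-1)}}{[n]!_q}(a+b)^n$ and matching the coefficient of $a^r b^{n-r}$ against the double sum defining $\Exp{q}(a)\Exp{q}(b)$ reduces the whole statement to the scalar identity
\[
  \frac{q^{\frac{1}{2}n(n-1)}}{[n]!_q}\,\binom{n}{r}_{\qb^2}
    = \frac{q^{\frac{1}{2}r(r-1)}}{[r]!_q}\cdot\frac{q^{\frac{1}{2}(n-r)(n-r-1)}}{[n-r]!_q},
\]
which follows from~\eqref{e:Qbin:CombBin} together with the elementary identity $\binom{n}{2} = \binom{r}{2} + \binom{n-r}{2} + r(n-r)$.

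I do not expect a conceptual obstacle; the only delicate point is the simultaneous bookkeeping of three conventions — $q$ versus $\qb$, the quantum factorial $[n]!_q$ versus the classical $(n)!_{q^2}$, and the $q^{\frac{1}{2}n(n-1)}$ prefactor — when passing between $\Exp{q}$ and $\exp_{\qb^2}$, and of which commutation parameter (and in which order) Lemma~\ref{L:ExpqProd} is invoked with. Once the identification $\Exp{q} = \exp_{\qb^2}$ is pinned down, the lemma follows at once.
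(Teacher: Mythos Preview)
Your proposal is correct and follows essentially the same route as the paper: identify $\Exp{q}(x) = \exp_{\qb^2}(x)$ via the factorial conversion $[n]!_q = q^{\frac{1}{2}n(n-1)}(n)!_{\qb^2}$ and then invoke Lemma~\ref{L:ExpqProd} with parameter $\qb^2$. Your extra care in tracking the direction of the commutation relation (rewriting $ab = q^2ba$ as $ba = \qb^2 ab$) and your optional direct verification via the scalar identity are both sound, but add nothing new beyond the paper's two-line argument.
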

\begin{proof}
  Equation~\eqref{e:Qbin:CombBin} shows
  \([n]! = q^{\frac{1}{2}n(n-1)} (n)!_{\qb^2}\) and so
  \(\Exp{q}(x) = \exp_{\qb^2}(x)\).
  Thus Lemma~\ref{L:ExpqProd} gives
  \[\Exp{q}(a + b) = \exp_{\qb^2}(a + b) = \exp_{\qb^2}(a) \, \exp_{\qb^2}(b) = \Exp{q}(a)\Exp{q}(b), \]
  as desired.
\end{proof}

\section{An \(\sR\)-matrix for \(\Uhsl{2}\)}\label{S:Rmorphism}
In this Section, we use the straightening framework developed in \S\ref{S:QUEA}
to give a direct and constructive approach to the construction of the
\(\sR\)-matrix for \(\Uhsl{2}\).
We begin with a few recollections;
further details can be found in~\cite{k,cp,oh}.
Recall that a \emph{Hopf algebra} over \(\bbC\) is an associative
\(\bbC\)-algebra \(\cA\) together with algebra homomorphisms
\(\Delta \colon \cA \to \cA \otimes \cA\), a \emph{co-product}, and
\(\varep \colon \cA \to \bbC\), a \emph{co-unit},
and an algebra anti-homomorphism \(\sS \colon \cA \to \cA\), an \emph{antipode},
satisfying certain relations.
In particular, as discovered by Sklyanin~\cite{sk}, \(\Uhsltwo\) is a Hopf
algebra with structure maps defined on algebra generators by
\begin{align*}
  \Delta: \Uhsltwo & \to \Uhsltwo\otimes\Uhsltwo,      & \sS: \Uhsltwo & \to \Uhsltwo, & \varep: \Uhsltwo & \to \bbC, \\
          \X & \mapsto \X \otimes \K + \Kb \otimes \X, &      \X & \mapsto -q\X,       &               \X & \mapsto 0, \\
          \Y & \mapsto \Y \otimes \K + \Kb \otimes \Y, &      \Y & \mapsto -\qb\Y,     &               \Y & \mapsto 0, \\
          \H & \mapsto \H \otimes 1  +  1  \otimes \H, &      \H & \mapsto -\H,        &               \H & \mapsto 0.
\end{align*}
Our goal is to enrich this Hopf structure of \(\Uhsltwo\) to a
quasi-triangular structure:

\begin{definition}\label{D:QTHA}
A \emph{quasi-triangular Hopf algebra} is a Hopf algebra \(\cA\) equipped
with an invertible element \(\sR \in \cA \otimes \cA\), called a
\emph{universal \(\sR\)-matrix}, satisfying
\begin{enumerate}
  \item\label{D:QTHA.i}
    \((\tau\oc\Delta)(a) = \sR \cdot \De(a) \cdot \sR^{-1}\) for every \(a \in \cA\),
  \item\label{D:QTHA.ii}
    \((\Delta\ot\id)(\sR) = \sR_{13}\cdot\sR_{23}\), and
  \item\label{D:QTHA.iii}
    \((\id\ot\Delta)(\sR) = \sR_{13}\cdot \sR_{12}\).
\end{enumerate}
\end{definition}

\noindent
Here, \(\tau \colon a \otimes b \mapsto b \otimes a\) is the twist map, and
\[
  \sR_{12} \coloneqq \sum\nolimits_i \al_i \ot \be_i \ot 1, \quad
  \sR_{13} \coloneqq \sum\nolimits_i \al_i\ot 1 \ot \be_i, \quad
  \sR_{23} \coloneqq \sum\nolimits_i 1 \ot\al_i \ot \be_i
\]
where \(\alpha_i,\beta_i \in \cA\) and are defined through writing \(\sR\) as
\(\sR \coloneqq \sum\nolimits_i \alpha_i \otimes \beta_i\).

\subsection{Action on \(\K\)}\label{S:DSeK}
We compute the Hopf algebra maps \(\Delta\), \(\varep\), and \(\sS\) on \(\K\).
Since \(\Delta\) is an algebra map
\begin{align*}
\De(\K) & = e^{\frac{h}{4}\De(\H)} = e^{\frac{h}{4}(\H \ot 1 + 1\ot \H)}
%         = \left(e^{\frac{h}{4}(\H \ot 1)}\right) \left(e^{\frac{h}{4}(1 \ot \H)}\right)
          = \left(e^{\frac{h}{4}\H} \ot 1\right)\left(1 \ot e^{\frac{h}{4}\H}\right)
         = e^{\frac{h}{4}\H} \ot e^{\frac{h}{4}\H} = \K \ot \K
\end{align*}
since \(\H \ot1\) and \(1 \ot \H\) commute.
Similarly,
\begin{align*}
  \sS(\K) = \sS\left(e^{\frac{h}{4}\H}\right) = e^{\frac{h}{4}\sS(\H)} = e^{-\frac{h}{4}\H} = \Kb,
    \quad\text{and}\quad
  \varep(\K) = \varep\left(e^{\frac{h}{4}\H}\right)= e^{\frac{h}{4}\varep(\H)} = e^0 = 1.
\end{align*}
In summary: \(\Delta(\K) = \K \otimes \K\), \(\sS(\K) = \Kb\), and \(\varep(\K) = 1\).

\subsection{Constructing an \texorpdfstring{$\sR$}{R}-matrix}\label{SS:ConstRmorph}
We construct a universal \(\sR\)-matrix for \(\Uhsltwo\) in two steps.
First, by considering the dependency on \(\X\), \(\Y\) and \(\H\) in an
\(\sR\)-matrix, we propose an ansatz.
Second, coefficients and parameters in the ansatz are determined through the
requirements on \(\sR\).

\subsubsection{An ansatz for \(\sR\)}\label{SSS:R-ansatz}
In general, \(\sR\) is a sum of elements of \(\Uhsltwo \otimes \Uhsltwo\), so
after straightening each tensor component, \(\sR\) can be expressed as a sum
of terms of the form \(\X^m\Y^t \ot \X^u\Y^n\), \(m,t,u,n \in \bbZ_{\geq 0}\).
Factors of \(\H\) appear through powers of \(\K\) and \(\Kb\) when
straightening, say in applying Lemma~\ref{L:XaYbHq}.
So a general term in \(\sR\) might look like
\[
    \K^r\X^m\Y^t \otimes \K^s\X^u\Y^n
      = e^{\frac{h}{4}\big(r(\H \otimes 1) + s(1 \otimes \H)\big)} \X^m\Y^t \otimes \X^u\Y^n
      \qquad\text{for \(r,s \in \bbZ\) and \(m,t,u,n \in \bbZ_{\geq 0}\)}.
\]

Condition~\ref{D:QTHA.i} of Definition~\ref{D:QTHA} for \(a = \X\) reads
\((\K \otimes \X + \X \otimes \Kb) \cdot \sR = \sR \cdot (\X \otimes \K + \Kb \otimes \X)\).
This suggests that an asymmetric change in powers of \(\K\) and \(\Kb\) in the
general term of \(\sR\) needs to be related to one another by straightening.
Ultimately, we require a device which can introduce additional powers of
\(\K \otimes 1\) and \(1 \otimes \K\) via straightening.
A solution for this is to include powers of \(e^{\frac{h}{4}\H \otimes \H}\),
as straightening with expressions involving \(\X\) and \(\Y\) creates terms of
the form
\(e^{\frac{h}{4}(\H + a) \otimes (\H + b)}\)
through Lemma~\ref{L:calcUhSL2}\eqref{L:calcUhSL2.ii}.

This argument leads to
\begin{equation}\label{e:Ansatz1}
  \sR = \sum_{k,m,n,r,s,t,u} a_{k,r,s,m,n,t,u}(q) \,  e^{\frac{h}{4} \left(k\, \H \ot \H  + r\,\H \ot1 + s\,1\ot \H   \right)} \X ^m \Y ^t \ot \X ^u \Y ^n.
\end{equation}
as a conjectural form for \(\sR\), where \(a_{k,r,s,m,n,t,u}(q)\) is a function
of \(q\).

We begin by considering an even simpler form in which \(\X\) and \(\Y\) occur
exclusively in the first and second tensor components, respectively.
Namely, set \(t = 0\) and \(u = 0\) in~\eqref{e:Ansatz1} and write
\(a_{k,r,s,m,n}(q) \coloneqq a_{k,r,s,m,n,0,0}(q)\) to obtain the ansatz
\begin{equation}\label{e:Ansatz3}
  \sR = \sum_{k,m,n,r,s} a_{k,r,s,m,n}(q) \,  e^{\frac{h}{4} k  \left(\H \ot \H \right)} \K ^r \X ^m  \ot  \K ^s \Y ^n.
\end{equation}
For brevity, explicit mention of the dependence of \(a_{k,r,s,m,n}(q)\) on
\(q\) will henceforth be suppressed.

\subsubsection{Condition~\ref{D:QTHA.i} of Definition~\ref{D:QTHA}}
Since \(\De\) is an algebra morphism it is sufficient to show that this
Condition holds for the generators \(\H\), \(\X\) and \(\Y\).

\myss{For the generator \(\H\)}
The Condition asserts that
\((\H \otimes 1 + 1 \otimes \H ) \cdot \sR = \sR\cdot(\H \otimes 1 + 1 \otimes \H)\).
Since \(e^{\frac{h}{4} \left(k\, \H \otimes \H + r\,\H \otimes1 + s\,1\otimes \H \right)}\)
commutes with \(\H \ot 1 + 1\ot \H\), the Condition is equivalent to the termwise
equality
\[
  (\H\otimes 1 + 1\otimes\H )(\K^r\X^m \otimes \K^s\Y^n)
    =
  (\K^r\X^m  \otimes \K^s\Y^n) (\H \otimes 1 + 1\otimes \H).
\]
By Lemma~\ref{L:calcUhSL2}\eqref{L:calcUhSL2.ii}, the right hand side is
\[ (\K^r\X^m \otimes \K^s\Y^n)(\H \otimes 1 + 1\otimes \H ) =
    (\H \otimes 1 + 1 \otimes \H )(\K^r\X^m \otimes \K^s\Y^n)
      - 2(m-n) (\K^r\X^m \otimes \K^s\Y^n)
\]
so the Condition implies that \(m = n\).
Setting \(a_{k,r,s,n} \coloneqq a_{k,r,s,n,n}\),~\eqref{e:Ansatz3} simplifies to
\begin{equation}\label{e:Ansatz4}
  \sR = \sum_{k,n,r,s} a_{k,r,s,n} \, e^{\frac{h}{4}k (\H \ot \H )} (\K ^r \X ^n  \ot  \K ^s \Y ^n).
\end{equation}

\myss{For the generator \(\X\)}
By Lemma~\ref{L:calcUhSL2}\eqref{L:calcUhSL2.ii},
\begin{align*}
  (\tau\oc\De)(\X ) \cdot \sR & =
    \sum_{k,n,r,s} a_{k,r,s,n} e^{\frac{h}{4}k (\H \ot \H)}
      \big(\Kb^{2k-1-r}\X^n \ot \X\K^s\Y^n + \X\K^r\X^n\ot \Kb^{2k+1-s}\Y^n\big), \\
  \sR \cdot \De(\X) & =
    \sum_{k,n,r,s} a_{k,r,s,n} \, e^{\frac{h}{4}k(\H\ot\H)}
      \big(\K^r\X^{n+1} \ot \K^s\Y^n\K + \K^r\X^n\Kb \ot \K^s\Y^n\X\big).
\end{align*}
The equation \((\tau\oc\De)(\X) \cdot \sR = \sR \cdot \De(\X)\) may be
rearranged as
\begin{multline}\label{eq:AeqB}
  A \coloneqq
    \sum_{k,n,r,s}
      a_{k,r,s,n} \,  e^{\frac{h}{4}k (\H \ot \H)}
        \Big(\big(\Kb^{2k-1-r}\X^n \ot \X\K^s\Y^n\big) - \big(\K^r\X^n\Kb \ot \K^s\Y^n\X\big)\Big) \\
    =
    \sum_{k,n,r,s}
      a_{k,r,s,n} \,  e^{\frac{h}{4}k (\H \ot \H)}
        \Big(\big(\K^{r}\X^{n+1} \ot \K^s\Y^n\K\big) - \big(\X\K^r\X^n \ot \Kb^{2k+1-s}\Y^n\big)\Big)
    \eqqcolon
  B.
\end{multline}

To simplify \(A\), note that \(\X\,\K^s  = \qb^s \K^s\X\) and
\(\X^n \Kb = q^n \Kb\X^n\) from Lemma~\ref{L:calcUhSL2}\eqref{L:calcUhSL2.iii},
so
\[
  A = \sum_{k,n,r,s}
    a_{k,r,s,n} \,  e^{\frac{h}{4}k (\H \ot \H)}
      \Big(\qb^s\big(\Kb^{2k-1-r}\X^n \ot \K^s\X\Y^n\big) -  q^n\big(\K^{r-1}\X^n \ot \K^s\Y^n\X\big)\Big).
\]
Observe that if we had \(s = -n\) and \(k = 1\), the commutator \([\X,\Y^n]\)
appears from the second tensor factors of this expression.
Doing so, and writing \(a_{r,n}\) for \(a_{1,r,-n,n}\),
Lemma~\ref{L:calcUhSL2}\eqref{L:calcUhSL2.v} allows us to compute the
commutator to yield
\begin{equation}\label{e:Aform2}
  A = e^{\frac{h}{4}\H \ot \H} \sum_{n,r} a_{r,n+1} [n+1]\, q^{n+1} \K^{r-1}\X^{n+1} \ot [\H + n] \Kb^{n+1}\Y^{n}.
\end{equation}

To simplify \(B\), set \(s = -n\) and \(k = 1\) as above, and
apply Lemma~\ref{L:calcUhSL2}\eqref{L:calcUhSL2.iii} to obtain
\[
  B =
    e^{\frac{h}{4}\H \ot \H}
    \sum_{n,r} a_{r,n}(q) \K^r\X^{n+1} \otimes \Big(q^n \Kb^{n-1} \Y^n -\qb^r\Kb^{n+3}\Y^n\Big).
\]
The second term simplifies if we had \(r = n\).
Again, doing so, and writing \(a_n\) for \(a_{n,n}\), yields
\[
  B =
    e^{\frac{h}{4}\H \ot \H}
    \sum_{n \geq 0} a_{n}\,(q-\qb) \K^n\X^{n+1} \otimes [\H+n]\,\Kb^{n+1}\Y^n.
\]
Also setting \(r = n\) for \(A\) in~\eqref{e:Aform2}, we have
\[
  A =
    e^{\frac{h}{4}\H \ot \H}
    \sum_{n\geq0} a_{n+1} [n+1]\, q^{n+1} \K^n\X^{n+1} \otimes [\H+n] \Kb^{n+1}\Y^n.
\]

Comparing \(A\) and \(B\) shows that the \(a_n = a_n(q)\) must satisfy
the two-term recurrence equation
\[
  a_{n+1}(q) \cdot [n+1] q^{n+1}= a_n(q) \cdot (q - \qb)
    \qquad\text{with initial condition \(a_0(q)=1\)},
\]
whence \(a_n  = \frac{(q - \qb)^n}{[n]!} \qb^{\frac{1}{2}n(n+1)}\).
Substituting these settings into~\eqref{e:Ansatz4} gives
\begin{equation*}
  \sR = e^{\frac{h}{4}\H \ot \H} \sum_{n \ge 0} \frac{(q - \qb)^n}{[n]! } \qb^{\frac{1}{2}n(n+1)}\,(\K^n\X^n  \ot \Kb^n\Y^n).
\end{equation*}
Now \(\K^n\X^n = q^{\frac{1}{2}n(n-1)}(\K\X)^n\) and
\(\Kb^n\Y^n = q^{\frac{1}{2}n(n-1)}(\Kb\Y)^n\) from
Lemma~\ref{L:calcUhSL2}\eqref{L:calcUhSL2.iv}, so
\begin{equation}\label{e:Ansatz5}
  \sR = e^{\frac{h}{4}\H \ot \H} \sum_{n\ge0}  \frac{(q - \qb)^n}{[n]! } q^{\frac{1}{2}n(n-3)}\,  (\K\X)^n \ot (\Kb\Y)^n.
\end{equation}

\myss{For the generator \(\Y\)}
It is readily shown that this condition is satisfied by the expression for
\(\sR\) given in~\eqref{e:Ansatz5}.

\subsubsection{Condition~\ref{D:QTHA.ii} of Definition~\ref{D:QTHA}}\label{SS:q-exp}
It is immediate from the definition of the quantum exponential function
in~\eqref{e:qExpFn} and the expression for \(\sR\) given in~\eqref{e:Ansatz5}
that
\begin{equation}\label{e:Ras:qExpFn}
  \sR =
    e^{\frac{h}{4}\H\ot\H}\,
    \Exp{q}\big(\lam_q \K\X\ot\Kb\Y\big)
  \qquad\mbox{where $\lam_q \coloneqq \qb (q-\qb)$}.
\end{equation}
Condition~\ref{D:QTHA.ii} of Definition~\ref{D:QTHA} requires that
\((\De\ot\id)(\sR) = \sR_{13}\cdot\sR_{23}\).  It is readily seen that
\begin{equation}\label{e:DeIdR}
  (\De\ot\id)(\sR) =
    e^{\frac{h}{4} (\H \otimes 1 \otimes \H + 1\otimes\H \otimes\H)}\,
    \Exp{q}\big(\lam_q (\K\X\otimes\K^2 + 1\otimes\K\X)\otimes\Kb\Y\big)
\end{equation}
since \(\De\) is an algebra morphism, and \(\De(\K) = \K \ot \K\) from~\S\ref{S:DSeK}.

On the other hand, recalling notation from Definition~\ref{D:QTHA}, we have
\begin{align*}
\sR_{13}\cdot\sR_{23} & =
  \Big(e^{\frac{h}{4}\,\H\ot1\ot\H}\,\Exp{q}\big(\lam_q (\K\X\ot1\ot\Kb\Y)\big)\Big)
  \Big(e^{\frac{h}{4}\,1\ot\H\ot\H}\,\Exp{q}\big(\lam_q (1\ot\K\X\ot\Kb\Y)\big)\Big) \\
  & =
    e^{\frac{h}{4}(\H \ot1\ot \H  + 1\ot \H \ot \H )}
      \Exp{q}\big(\lam_q (\K\X \ot \K^2 \ot \Kb\Y)\big) \cdot
      \Exp{q}\big(\lam_q (1 \ot \K\X \ot \Kb\Y)\big).
\end{align*}
where we have used Lemma~\ref{L:1otX} to straighten an exponential.
Let
\(C \coloneqq \K\X \ot \K^2 \ot \Kb\Y\) and
\(D \coloneqq 1\ot \K\X\ot\Kb\Y\).
Then Lemma~\ref{L:calcUhSL2}\eqref{L:calcUhSL2.iii} gives
\(DC = \qb^2\big(\K\X \ot \K^3\X \ot (\Kb\Y )^2\big) = \qb^2 CD\), so
Lemma~\ref{L:qExpFact} applies to give
\[
  \sR_{13}\cdot\sR_{23} =
    e^{\frac{h}{4}(\H \ot 1 \ot \H + 1 \ot \H \ot \H)} \cdot
      \Exp{q}\big(\lam_q (\K\X \ot \K^2 \ot \Kb\Y + 1 \ot \K\X \ot \Kb\Y)\big).
\]
Comparing with~\eqref{e:DeIdR} shows that this is \((\De\ot\id)(\sR)\), so the
Condition is satisfied.

\subsubsection{Condition~\ref{D:QTHA.iii} of Definition~\ref{D:QTHA}}
It may be shown similarly that this Condition is also satisfied.

We have therefore proven the following:
\begin{theorem}\label{T:Rmorphism1}
  A universal \(\sR\)-matrix for \(\Uhsltwo\) is given by
  \[ \sR = e^{\frac{h}{4} \H\ot\H} \sum_{n\ge0} \frac{(q - \qb)^n}{[n]!} q^{\frac{1}{2}n(n-3)}\,(\K\X)^n\ot(\Kb\Y)^n. \]
\end{theorem}

\begin{remark}\label{R:Observe}
  Two comments on the derivation of \(\sR\):
\begin{enumerate}
  \item[(A)]\label{R:Observe.A} A more detailed calculation, dealing with the
    complete form for \(\sR\) proposed in~\eqref{e:Ansatz1}, will show that
    the \(\sR\)-matrix found in Theorem~\ref{T:Rmorphism1} is the only one
    of this form.

  \item[(B)]\label{R:Observe.B} The salient aspects of the derivation of \(\sR\)
    are \textbf{(i)}~the appearance of the commutator \([\X,\Y^n]\)
    in~(\ref{e:Aform2}), and \textbf{(ii)}~the fact that \(\sR\) may be expressed
    succinctly in terms of the quantum exponential function
    in~\eqref{e:Ras:qExpFn}.
    The factorization property given in Lemma~\ref{L:qExpFact} for the quantum
    exponental is crucial in showing that Conditions~\ref{D:QTHA.ii}
    and~\ref{D:QTHA.iii} of Definition~\ref{D:QTHA} hold.
\end{enumerate}
\end{remark}

\section{Ribbon Hopf algebra structure on \texorpdfstring{$\Uhsltwo$}{Uhsl2}}\label{S:ribbon}
In this Section, we use our straightening methods to compute a ribbon Hopf
algebra structure on \(\Uhsltwo\).
These calculations are similar to those in~\cite[Appendix A]{oh}, but are
simpler and more direct.
For instance, our calculation of \(\sS(\bu)\), based on the combinatorial
identity in Lemma~\ref{L:CauchyExp}, is significantly shorter than the
corresponding calculation in~\cite{oh}.

To begin, let \(\cA\) be a quasi-triangular Hopf algebra with \(\sR\)-matrix
\(\sR = \sum_i \alpha_i \otimes \beta_i\).
Drinfel'd observed in~\cite{d2} that the element
\begin{equation}\label{eq:foru.1}
  \bu\coloneqq \sum\nolimits_i  \sS(\be_i)\cdot\al_i \in \cA.
\end{equation}
satisfies the following properties:
\begin{enumerate}
  \item\label{L:uProps.i}
    For all \(x \in \cA\), \(\sS^2(x) = \bu\, x\, \bu^{-1}\);
  \item\label{L:uProps.ii}
    \(\Delta(\bu) = (\tau(\sR)\cdot\sR)^{-1}(\bu\otimes\bu) = (\bu\otimes\bu)(\tau(\sR)\cdot\sR)^{-1}\); and
  \item\label{L:uProps.iii}
    \(\bu\) is invertible, with
    \(\bu^{-1} = \sum\nolimits_i \sS^{-1}(\overline\beta_i)\overline\alpha_i\), where
    \(R^{-1} = \sum\nolimits_i \overline\alpha_i\otimes\overline\beta_i\).
\end{enumerate}
See~\cite[\emph{pp.}180--184]{k} for details.

Ribbon Hopf algebras are quasi-triangular Hopf algebras which admit a sort of
square root to the element \(\bu\).
They were introduced by Reshetikhin and Turaev~\cite{rt2} in order
to construct a polynomial invariant for framed links.

\begin{definition}\label{D:RHA}
A \emph{ribbon Hopf Algebra} is quasi-triangular Hopf algebra \((\cA,\sR)\)
equipped with an element \(\bv \in \cA\), called a \emph{ribbon element}, satisfying:
\begin{enumerate}
  \item \label{D:RHA.i}   \(\bv\) is central;
  \item \label{D:RHA.ii}  \(\bv^2= \sS(\bu)\cdot \bu\);
  \item \label{D:RHA.iii} \(\sS(\bv) = \bv\);
  \item \label{D:RHA.iv}  \(\varep(\bv)=1\); and
  \item \label{D:RHA.v}   \(\De(\bv) = (\tau(\sR)\cdot\sR)^{-1} \cdot (\bv\ot\bv)\).
\end{enumerate}
\end{definition}

We now explicitly compute a ribbon element for \(\Uhsl{2}\) by first
directly computing the element \(\bu\) for the \(\sR\)-matrix in
Theorem~\ref{T:Rmorphism1} from~\eqref{eq:foru.1}, and then obtaining \(\bv\)
from Condition~\ref{D:RHA.ii} of Definition~\ref{D:RHA}.

\begin{lemma}\label{L:RHA:u}
  \(\displaystyle \bu = e^{-\frac{h}{4} \H ^2}\sum_{n\ge0} (-1)^n \frac{(q-\qb)^n}{[n]!}  \: \qb^{\frac{1}{2}n(n+3)} \: \Kb^{2n} \Y ^n \X ^n\).
\end{lemma}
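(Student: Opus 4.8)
The plan is to compute $\bu = \sum_i \sS(\be_i)\cdot \al_i$ directly from the explicit $\sR$-matrix of Theorem~\ref{T:Rmorphism1}. First I would write $\sR = e^{\frac{h}{4}\H\ot\H}\sum_{n\ge0} c_n (\K\X)^n \ot (\Kb\Y)^n$ with $c_n = \frac{(q-\qb)^n}{[n]!}q^{\frac12 n(n-3)}$, and then expand the exponential factor as $\sum_{j\ge0}\frac{1}{j!}\big(\tfrac{h}{4}\big)^j \H^j\ot\H^j$ so that a generic term $\al_i\ot\be_i$ has the shape $\al = \big(\tfrac{h}{4}\big)^j\tfrac{1}{j!}\H^j(\K\X)^n$ and $\be = \H^j(\Kb\Y)^n$ (up to rearranging commuting powers of $\K$ with $\H$). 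Applying the antipode — an algebra anti-homomorphism — to $\be$ using $\sS(\H)=-\H$, $\sS(\Y)=-\qb\Y$, $\sS(\K)=\Kb$ from Theorem~\ref{T:qUsl2Hopf} and~\eqref{e:DSeK}, I get $\sS(\be) = \sS(\Kb\Y)^n \cdot \sS(\H)^j = (-\qb)^n(\K^{-1}\cdot\K^2\Y)^n\cdot(-1)^j\H^j$ — more carefully, $\sS(\Kb\Y) = \sS(\Y)\sS(\Kb) = -\qb\Y\K$, and then $(\,{-\qb}\,\Y\K)^n$ must be re-straightened. So $\bu = \sum_{n,j}\sS(\be)\al$, and the $j$-sum should reassemble into an exponential in $\H^2$.

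Next I would do the bookkeeping to collapse the double sum into the claimed closed form. The key mechanical steps are: (1) use Lemma~\ref{L:calcUhSL2}\eqref{L:calcUhSL2.ii} to slide the $\H^j$ coming from $\sS(\be)$ past the $\Y^n\X^n$ coming from $\al$ (each pass through $\Y$ or $\X$ shifts $\H$ by $\mp 2$), thereby pulling all $\H$-dependence to the left; (2) collect the surviving powers of $\K$ — from $(\K\X)^n$ on the $\al$ side and from $\sS(\Kb\Y)^n = (-\qb\Y\K)^n$ on the other side — into a single $\Kb^{2n}$ after using Lemma~\ref{L:calcUhSL2}\eqref{L:calcUhSL2.iii}, \eqref{L:calcUhSL2.iv} to normal-order $(\K\X)^n$ as $\qb^{\frac12 n(n-1)}\K^n\X^n$ and similarly for $(\Y\K)^n$; (3) track the resulting scalar power of $q$: from $c_n$, from the two $q^{\frac12 n(n-1)}$ straightening factors, from the $(-\qb)^n$ in $\sS(\Y)^n$, and from commuting $\K^n$ past $\X^n$ or $\Y^n$. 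I expect these to combine to $(-1)^n\frac{(q-\qb)^n}{[n]!}\qb^{\frac12 n(n+3)}$; (4) finally, recognize $\sum_j \frac{1}{j!}\big(-\tfrac{h}{4}\big)^j \H^{2j} = e^{-\frac{h}{4}\H^2}$ once the sign from $\sS(\H)^j=(-1)^j\H^j$ is accounted for, noting that the $\H$ on the $\al$-side contributes $\H^j$ with a $+$ sign so the product gives $\H^{2j}$ with the sign $(-1)^j$ from the antipode alone, and the factor $(h/4)^j/j!$ survives unchanged. Pulling $e^{-\frac{h}{4}\H^2}$ out front and the $n$-sum gives exactly the stated expression for $\bu$.

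The main obstacle I anticipate is the sign and $q$-power bookkeeping in step (3): there are several sources of powers of $q$ (the coefficient $q^{\frac12 n(n-3)}$, two normal-ordering factors when converting $(\K\X)^n$ and $(\Y\K)^n$ into $\K^n\X^n$ and $\K^n\Y^n$, a factor $\qb^n$ per unit from $\sS(\Y) = -\qb\Y$, and a further $q^{\pm n^2}$-type factor from moving a $\K^n$ past an $\X^n$ or $\Y^n$ via Lemma~\ref{L:calcUhSL2}\eqref{L:calcUhSL2.iii}), and it is easy to be off by a sign or by $q^{\pm n}$. A safe way to organize this is to first normal-order the $\sR$-matrix itself into the form $e^{\frac{h}{4}\H\ot\H}\sum_n \frac{(q-\qb)^n}{[n]!}\qb^{\frac12 n(n+1)}\K^n\X^n\ot\Kb^n\Y^n$ (this intermediate expression already appears in the derivation of Theorem~\ref{T:Rmorphism1}), and only then apply $\sS$ to the second factor; this eliminates one of the straightening steps. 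The $\H$-shifting in step (1) is routine given Lemma~\ref{L:calcUhSL2}, and the final resummation in step (4) is immediate, so the whole argument is a controlled, if fiddly, direct computation with no conceptual gap.
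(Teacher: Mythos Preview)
Your approach is essentially identical to the paper's: expand the exponential prefactor as a power series in $\H\otimes\H$, apply the antipode to the second tensor factor, straighten using Lemma~\ref{L:calcUhSL2}, and re-sum the $j$-series back into an exponential. One small point to watch in step~(4): after commuting the $\H$'s past $\Y^n$ the $j$-sum reassembles into $e^{-\frac{h}{4}(\H+2n)^2}$ rather than $e^{-\frac{h}{4}\H^2}$ directly, and the paper then uses $e^{-\frac{h}{4}(\H+2n)^2} = \qb^{2n^2}\,e^{-\frac{h}{4}\H^2}\,\Kb^{4n}$ to finish --- this is where the remaining powers of $\Kb$ and $\qb$ come from.
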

\begin{proof}
From Theorem~\ref{T:Rmorphism1} and
Lemma~\ref{L:calcUhSL2}\eqref{L:calcUhSL2.iv},
\[
  \sR = \sum_{m,n\geq0} \frac{h^m}{4^m m!} \, c_n \,\qb^{n(n-1)}
    \big(\H^m\K^n\X^n\big) \otimes \big(\H^m\Kb^n\Y^n\big)
    \quad\text{where}\;
    c_n\coloneqq\frac{(q-\qb)^n}{[n]!} \qb^{\frac{1}{2}n(n-3)}
\]
and so, from~\eqref{eq:foru.1},
\begin{equation*}
  \bu = \sum_{m,n\geq0} \frac{h^m}{4^m m!} \,
    c_n\,\qb^{n(n-1)}\,\big(\sS(\H^m\Kb^n\Y^n)\big)\cdot\big(\H^m\K^n\X^n\big).
\end{equation*}
To straighten the summand, note that \(\sS(\Kb) = \K\) from \S\ref{S:DSeK},
and that \(\H\) and \(\K\) commute, so
\[
  \big(\sS(\H ^m \Kb^n \Y ^n)\big) \cdot \big(\H ^m \K ^n \X ^n\big)
    = (-1)^{m+n} \qb^n \Y ^n \H ^{2m}\K ^{2n} \X ^n
    = (-1)^{m+n} \qb^n q^{2n^2} (\H +2n)^{2m} \K ^{2n}\Y ^n \X ^n
\]
from Lemma~\ref{L:calcUhSL2}\eqref{L:calcUhSL2.ii} and~\eqref{L:calcUhSL2.iii}.
So
\[ \bu = \sum_{n\ge0} (-1)^n c_n \, q^{n^2} e^{-\frac{h}{4}(\H +2n)^2} \K ^{2n}\Y ^n \X ^n. \]
The result then follows by observing that
\(e^{-\frac{h}{4}(\H +2n)^2} = \qb^{2n^2} e^{-\frac{h}{4} \H ^2} \cdot \Kb^{4n}\).
\end{proof}

\begin{lemma}\label{L:SuKb4}
  \(\sS(\bu) = \Kb^4 \bu\).
\end{lemma}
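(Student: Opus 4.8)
The plan is to apply the antipode $\sS$ directly to the explicit formula for $\bu$ in Lemma~\ref{L:RHA:u} and then straighten the result back into the canonical form $e^{-\frac{h}{4}\H^2}\sum_n (\cdots)\Kb^{2n}\Y^n\X^n$, comparing coefficients with $\bu$ itself. Since $\sS$ is an algebra anti-homomorphism, it reverses the order of products: $\sS(\Kb^{2n}\Y^n\X^n) = \sS(\X)^n\sS(\Y)^n\sS(\Kb)^{2n}$. Using Theorem~\ref{T:qUsl2Hopf} and~\eqref{e:DSeK}, this is $(-q)^n(-\qb)^n\X^n\Y^n\K^{2n} = \X^n\Y^n\K^{2n}$. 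Also $\sS$ fixes the exponential prefactor, since $\sS(e^{-\frac{h}{4}\H^2}) = e^{-\frac{h}{4}\sS(\H)^2} = e^{-\frac{h}{4}\H^2}$. So the first step gives
\[
  \sS(\bu) = e^{-\frac{h}{4}\H^2}\sum_{n\ge0}(-1)^n\frac{(q-\qb)^n}{[n]!}\,\qb^{\frac{1}{2}n(n+3)}\,\X^n\Y^n\K^{2n}.
\]

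Next I would straighten $\X^n\Y^n\K^{2n}$ into a $\Kb$-power times $\Y$-over-$\X$ form. First move $\K^{2n}$ to the left past $\X^n\Y^n$: by Lemma~\ref{L:calcUhSL2}\eqref{L:calcUhSL2.iii}, $\X^n\Y^n\K^{2n} = q^{-2n^2}\K^{2n}\X^n\Y^n$ after moving past $\X^n$ (factor $q^{2n\cdot n}$) and then past $\Y^n$ (factor $\qb^{2n\cdot n}$) — I should track these exponents carefully, but the net is a power of $q$ depending only on $n$. Then Lemma~\ref{L:XnTn} expands $\X^n\Y^n = \sum_{i} [n]_i^2\Qbinom{\H}{i}\Y^{n-i}\X^{n-i}$, and I would commute $\K^{2n}$ past $\Qbinom{\H}{i}$ (they commute) and then past $\Y^{n-i}$ using Lemma~\ref{L:calcUhSL2}\eqref{L:calcUhSL2.iii} again, picking up further powers of $q$. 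Relabelling the summation by $m = n-i$, each term becomes a multiple of $\Kb^{2m}\Y^m\X^m$ times a power of $q$ and a quantum-binomial-in-$\H$ factor coming from $\Qbinom{\H}{i}$ (with $\H$ shifted by the commutations, so I'd need Lemma~\ref{L:calcUhSL2}\eqref{L:calcUhSL2.ii} to pull $\Qbinom{\H + \text{shift}}{i}$ to the correct position).

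The main obstacle will be the double-sum bookkeeping: after expanding via Lemma~\ref{L:XnTn} I have a sum over $n$ and $i$, and collecting the coefficient of a fixed $\Kb^{2m}\Y^m\X^m$ requires summing over $n$ with $i = n-m$, i.e.\ an inner sum $\sum_{n\ge m}(\text{coefficients})[n]_{n-m}^2\Qbinom{\H+\cdots}{n-m}$. I expect this inner sum to be evaluable by one of the $q$-identities already established — most plausibly the Cauchy-type identities in Lemma~\ref{L:Cauchy:FinProdId} or the bimodal-permutation identity of Theorem~\ref{T:qBinThm}, possibly after recognizing $\Qbinom{\H}{j}$ via~\eqref{e:BinomH} and reindexing. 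The target is that this inner sum collapses so that the coefficient of $\Kb^{2m}\Y^m\X^m$ in $\sS(\bu)$ equals $\qb^4$ (i.e.\ the $\Kb^4$ eigenvalue-shift) times the corresponding coefficient $(-1)^m\frac{(q-\qb)^m}{[m]!}\qb^{\frac{1}{2}m(m+3)}$ in $\bu$ — equivalently, that $\sS(\bu) = \Kb^4\,\bu$ holds after noting $\Kb^4$ acts on the $n=m$ term by an extra $q^{-4}$-type scalar via Lemma~\ref{L:calcUhSL2}\eqref{L:calcUhSL2.iii} commuted through. An alternative, cleaner route — which I would try first — is to avoid Lemma~\ref{L:XnTn} entirely: write $\sS(\bu) = e^{-\frac{h}{4}\H^2}\sum_n(\cdots)\X^n\Y^n\K^{2n}$, and instead use Lemma~\ref{L:XaYbHq} with $a=b=n$ only after first establishing that $e^{-\frac{h}{4}\H^2}\,\Kb^{2n}$ combines with $\X^n\Y^n\K^{2n}$ so that the $\H$-shifts from straightening exactly reproduce, term by term, the formula of Lemma~\ref{L:RHA:u} up to the overall $\Kb^4$. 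If the direct coefficient comparison proves too messy, a fallback is to verify $\sS(\bu) = \Kb^4\bu$ abstractly using Drinfel'd's identity $\sS(\bu)\bu$ is central together with property~\ref{L:uProps.i} ($\sS^2(x) = \bu x\bu^{-1}$) and the known $\sS^2(x) = \K^4 x \Kb^4$ on $\Uhsl{2}$, which forces $\bu^{-1}\Kb^4\bu$... — but I'd expect the paper prefers the explicit computation, so I would push the straightening through.
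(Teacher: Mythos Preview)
Your approach is essentially the paper's: apply \(\sS\) term-by-term to the formula of Lemma~\ref{L:RHA:u}, obtain \(\sS(\bu) = e^{-\frac{h}{4}\H^2}\sum_n d_n\,\K^{2n}\X^n\Y^n\), expand \(\X^n\Y^n\) via Lemma~\ref{L:XnTn}, and compare coefficients of \(\Y^m\X^m\) with those in \(\Kb^4\bu\). The paper then reduces the comparison to showing
\[
\Kb^{4(n+1)} \;=\; \sum_{s\ge 0}\binom{-(n+1)}{s}_{q^2}\prod_{r=0}^{s-1}\bigl(\K^4 - q^{2r}\bigr),
\]
after converting \(\Qbinom{\H}{s}\) to a product and passing from quantum to negative \(q\)-binomials.

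The one point to sharpen: the identity that closes the argument is not Lemma~\ref{L:Cauchy:FinProdId} or Theorem~\ref{T:qBinThm} directly, but the formal-power-series extension Lemma~\ref{L:CauchyExp}. The inner sum is over all \(s\ge 0\) and carries the negative binomial \(\binom{-(n+1)}{s}_{q^2}\), so the finite Cauchy identity does not apply as stated; one must invoke the version valid as an identity in \(\bbQ[x,t][\![h]\!]\) and then specialize \(x = -(n+1)\), \(e^{ht} = \K^4\). Also, a small slip in your fallback: the inner-automorphism formula is \(\sS^2(x) = \K^2 x\,\Kb^2\), not \(\K^4 x\,\Kb^4\), so that route gives centrality of \(\Kb^2\bu\) but does not by itself pin down \(\sS(\bu) = \Kb^4\bu\) without further input.
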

\begin{proof}
Let
$d_n \coloneqq (-1)^n \frac{(q-\qb)^n}{[n]!} \, \qb^{\frac{1}{2}n(n+3)}$.
Then
\[ \Kb^4\bu = e^{-\frac{h}{4} \H^2} \sum_{n \geq 0} d_n\, \Kb^{2(n+2)}\Y^n\X^n. \]
From the definition of \(\sS\) at the beginning of \S\ref{S:Rmorphism} and from
its action on \(\K\) given in \S\ref{S:DSeK},
\[ \sS(\bu) = \sum_{n \geq 0} d_n \, \X^n\Y^n\K^{2n}e^{-\frac{h}{4} \H^2}. \]
Now \(\K^{2n}\) and \(e^{-\frac{h}{4} \H^2}\)
commute and, by Lemma~\ref{L:calcUhSL2}\eqref{L:calcUhSL2.ii}, both of
these commute with \(\X^n \Y^n\), so
\begin{equation*}
  \sS(\bu) = e^{-\frac{h}{4} \H ^2} \sum_{n\ge0} d_n \, \K^{2n}\X^n\Y^n.
\end{equation*}
Using the Poincar\'e--Birkhoff--Witt Theorem, we can establish the equality of
\(\Kb^4\bu\) and \(\sS(\bu)\) by comparing coefficients.
First we compare the coefficients of \(\Y^n\X^n\) with the aid of
Lemma~\ref{L:XaYbHq}.
Doing so, the assertion of the Lemma is therefore equivalent to the identity
\begin{equation*}
d_n \, \Kb^{2(n+2)} =
  \sum_{m\ge n} d_m  \, \frac{[m]!^2}{[n]!^2} \, \K ^{2m} \, \Qbinom{\H }{m-n}
\end{equation*}
and so, by changing the index of summation to \(s\coloneqq m-n\), to
showing that
\[
\Kb^{4(n+1)} = A_n
 \qquad\mbox{where}\qquad
A_n \coloneqq \sum_{s\ge 0} \frac{d_{n+s}}{d_n}  \, \frac{[n+s]!^2}{[n]!^2} \, \K ^{2s} \, \Qbinom{\H }{s}.
\]
It is in this form that we shall prove the lemma.
From~\eqref{e:NTN:qKqb} and~\eqref{e:BinomH},
\[
\Qbinom{\H}{s} = \frac{1}{[s]!} \, \prod_{r=0}^{s-1}
\frac{\qb^r \K ^2 - q^r \Kb^2}{q-\qb}
= \frac{1}{[s]!} \, \frac{\qb^{\frac{1}{2}s(s-1)}}{(q-\qb)^s} \, \Kb^{2s} \, \prod_{r=0}^{s-1}\big(\K^4-q^{2r}\big)
\]
so
\[
  A_n =  \sum_{s\ge0} (-1)^s \; \qb^{s(n+s+1)}\Qbinom{n+s}{s} \,\prod_{r=0}^{s-1}\big(\K^4-q^{2r}\big).
\]
We shall now transform the quantum binomial coefficient into a \(q\)-binomial
coefficient through~\eqref{e:Qbin:CombBin} and then to a negative \(q\)-binomial
coefficient to obtain
\[
  A_n
  = \sum_{s \geq 0} (-1)^s \qb^{s(2n+s+1)} \binom{n+s}{s}_{q^2} \, \prod_{r=0}^{s-1} \big(\K^4-q^{2r}\big)
  = \sum_{s \geq 0} \binom{-(n+1)}{s}_{q^2} \, \prod_{r=0}^{s-1} \big(\K ^4 - q^{2r}\big).
\]
Since \(q^2 = e^{h}\) and \(\K^4 = e^{h\H}\),
Lemma~\ref{L:CauchyExp} gives \(A_n = e^{-(n+1)h\H} = \Kb^{4(n+1)}\).
\end{proof}

To find a candidate element for \(\bv\), observe that
Lemma~\ref{L:SuKb4} simplifies Definition~\ref{D:RHA}\ref{D:RHA.ii} to
\(\bv^2 = \sS(\bu) \cdot \bu = \Kb^4 \bu^2\).
But property~\ref{L:uProps.i} of \(\bu\) together with \S\ref{S:DSeK} says
\(\bu \Kb= \sS^2(\Kb)\bu = \Kb\bu\).
Thus \(\Kb^2\bu\) is a putative expression for the ribbon
element, we now show satisfies the remaining conditions of Definition~\ref{D:RHA}.

\begin{theorem}\label{L:RHA:v}
  A ribbon element for \((\Uhsltwo,\sR)\) is
  \begin{align*}
    \bv \coloneqq \Kb^2\bu = e^{-\frac{h}{4} \H ^2} \cdot \sum_{n\ge0} \frac{(\qb-q)^n}{[n]!} \, \qb^{\frac{1}{2}n(n+3)}  \Kb^{2(n+1)} \Y^n\X^n.
  \end{align*}
\end{theorem}
\begin{proof}
We verify Conditions (i)--(v) of Definition~\ref{D:RHA}.

Condition~\ref{D:RHA.i} asks for \(\bv\) to be central.
It is enough to show \(\bv\) commutes with the generators \(\H\), \(\X\), and \(\Y\).
Let \(T_n \coloneqq e^{-\frac{h}{4}\H^2}\Kb^{2(n+1)}\Y^n\X^n\) be a general
term of \(\bv\).
Lemma~\ref{L:calcUhSL2}\eqref{L:calcUhSL2.ii} shows that \(\H\) commutes with
\(T_n\), and hence \(\bv\).
That \(\X\) and \(\Y\) commute with \(\bv\) follow from similar calculations,
so we only show commutation with \(\X\).
From Lemma~\ref{L:calcUhSL2}\eqref{L:calcUhSL2.ii},
\(\X  e^{-\frac{h}{4}\H^2} = \qb^2 e^{-\frac{h}{4}\H^2}\K^4\X\).
Thus,
\begin{align*}
\X  \,T_n
  &= e^{-\frac{h}{4} \H ^2} q^{2n} \Kb^{2(n-1)} (\X \,\Y ^n) \X ^n
    &\explain{Lemma~\ref{L:calcUhSL2}\eqref{L:calcUhSL2.iii}}	\\
  &= e^{-\frac{h}{4} \H ^2} q^{2n} \Kb^{2(n-1)}
  \big( \Y ^n \X ^{n+1} + [n]\, [\H +n-1]\, \Y ^{n-1} \X ^n\big)
    &\explain{Lemma~\ref{L:calcUhSL2}\eqref{L:calcUhSL2.v}}
\end{align*}
and so, using the notation \(c_n\) from the proof of Lemma~\ref{L:RHA:u},
\begin{align*}
\X  \bv
&= e^{-\frac{h}{4} \H ^2} \sum_{n\ge0} c_n\,  q^{2n} \Kb^{2(n-1)}
\big(  \Y ^n \X ^{n+1} + [n]\, [\H +n-1]\, \Y ^{n-1} \X ^n\big)	\\
& =   e^{-\frac{h}{4} \H ^2} \sum_{n\ge1} q^{2n-2} \Kb^{2(n-2)}
\big( c_{n-1}  + c_n \, q^2 \Kb^2 [n]\, [\H +n-1]\,\big) \Y ^{n-1} \X ^n
\end{align*}
by shifting the summation index for the term containing \(\Y^n\X^{n+1}\) by
one. The bracketed term is equal to
\(\frac{(\qb-q)^{n-1}}{[n-1]!} \,\qb^{n-1} \Kb^4  \qb^{\frac{1}{2}(n^2+3n-4)}\).
Therefore
\[ \X \bv =
  e^{-\frac{h}{4} \H ^2} \sum_{n\ge1} \qb^{\frac{1}{2}(n^2+n-2)}
  \frac{(\qb-q)^{n-1}}{[n-1]!} \,\Kb^{2n} \Y ^{n-1} \X ^n.
\]
The right hand side is readily seen to be equal to \(\bv\X\) by  shifting the
summation index for the expression for \(\bv\) to start at \(0\).

Condition~\ref{D:RHA.ii} holds by construction of \(\bv\).
Conditions~\ref{D:RHA.iii} and~\ref{D:RHA.iv} are immediate by definition
of the maps \(\sS\) and \(\varep\) for \(\Uhsl{2}\).
Condition~\ref{D:RHA.v} amounts to showing
\((\tau(\sR)\cdot\sR)\cdot(\Kb^2\otimes\Kb^2)\cdot\Delta(\bu) = (\Kb^2\otimes\Kb^2)\cdot(\bu\otimes\bu)\).
Since each term of \(\sR\), and thus \(\tau(\sR)\), commutes with
\(\Kb^2\otimes\Kb^2\), it is sufficient to show
\((\tau(\sR)\cdot\sR)\cdot\Delta(\bu) = \bu\otimes\bu\).
But this is property~\ref{L:uProps.ii} of \(\bu\).
\end{proof}

\section{Straightening in \texorpdfstring{$\Uhsl{n+1}$}{Uhsln}}\label{S:Uhsln}
In this Section, we generalize the methods of \S\ref{S:QUEA} from \(\Uhsl{2}\)
to \(\Uhsl{n+1}\) for \(n \geq 2\).
These methods are applied in \S\ref{S:Uhsl3-coeffs} to construct an
\(\sR\)-matrix for \(\Uhsl{n+1}\).
We begin with a summary on the quantized universal enveloping algebra of
\(\fsl_{n+1}\) for all \(n \geq 1\); for details, see, for example,~\cite{cp}.
By definition, \(\Uhsl{n+1}\) is the
\(\bbC[\![h]\!]\)-algebra generated by \(n\) \(\fsl_2\) triples
\((\X_i,\H_i,\Y_i)\), \(i = 1,\ldots,n\), subject to the relations
\([\H_i,\H_j] = 0\),
\begin{align*}
  [\H_i,\X_j] & = \begin{dcases} \hphantom{-\X_j}\mathllap{2\X_i}  & j = i, \\ -\X_j & j = i \pm 1, \\ \hphantom{-\X_j}\mathllap{0} & \text{otherwise}, \end{dcases} &
  [\H_i,\Y_j] & = \begin{dcases} -2\Y_i & j = i, \\ \hphantom{-2\Y_i}\mathllap{\Y_j}  & j = i \pm 1, \\ \hphantom{-2\Y_i}\mathllap{0} & \text{otherwise}, \end{dcases} &
  [\X_i,\Y_j] & = \delta_{ij} \frac{\K_i^2 - \Kb_i^2}{q - \qb},
\end{align*}
where \(\K_i \coloneqq e^{\frac{h}{4}\H_i}\), and for each \(i,j\) with \(|i -
  j| = 1\), the \emph{\(\q\)-Serre relations}
\begin{align*}
  \X_i^2\X_j - (q + \qb)\X_i\X_j\X_i + \X_j\X_i^2 & = 0, &
  \Y_i^2\Y_j - (q + \qb)\Y_i\Y_j\Y_i + \Y_j\Y_i^2 & = 0,
\end{align*}
A Hopf algebra structure on \(\Uhsl{n+1}\) may be defined on the generators
\((\X_i,\H_i,\Y_i)\) by taking the maps from the beginning of
\S\ref{S:Rmorphism} on each \(\fsl_2\)-triple.

\subsection{Simplifying the Serre relations}\label{SS:Uhsl3-serre}
As indicated in~\eqref{D:QhUSL.rels}, straightening methods amount to
interpreting quadratic relations as commutation relations.
However, the \(q\)-Serre relations in the definition of \(\Uhsl{n+1}\) are
cubic.
In order to build a straightening framework to deal with them, we must somehow
break them into quadratic relations.
One way to do so is to rewrite the \(q\)-Serre relation between \(\X_i\)
and \(\X_{i+1}\) as
\begin{align*}
  \X_i^2\X_{i+1} - (q + \qb)\X_i\X_{i+1}\X_i + \X_{i+1}\X_i^2
    & = \X_i(\X_i\X_{i+1} - \qb\X_{i+1}\X_i) - (q\X_i\X_{i+1} - \X_{i+1}\X_i)\X_i \\
    & = \qbr\X_i(\qr\X_i\X_{i+1} - \qbr\X_{i+1}\X_i) - \qr(\qr\X_i\X_{i+1} - \qbr\X_{i+1}\X_i)\X_i = 0.
\end{align*}
Similarly, the \(q\)-Serre relation between \(\X_{i+1}\) and \(\X_ii\) can be
rewritten as
\begin{equation*}
  \qbr(\qr\X_i\X_{i+1} - \qbr\X_{i+1}\X_i)\X_{i+1} - \qr\X_{i+1}(\qr\X_i\X_{i+1} - \qbr\X_{i+1}\X_i) = 0.
\end{equation*}
So, setting \(\X_{i,i+1} \coloneqq \qr\X_i\X_{i+1} - \qbr\X_{i+1}\X_i\)
the \(q\)-Serre relations involving \(i\) and \(i+1\) can be written as quadratic
relations
\[
  \qbr\X_i\X_{i,i+1} - \qr\X_{i,i+1}\X_i = 0,
    \quad\text{and}\quad
  \qbr\X_{i,i+1}\X_{i+1} - \qr\X_{i+1}\X_{i,i+1} = 0.
\]

\begin{example}\label{ex:Uhsl3}
When \(n = 2\), the elements
\(\H_1 \prec \H_2 \prec \Y_1 \prec \Y_{12} \prec \Y_2 \prec \X_1 \prec \X_{12} \prec \X_2\)
of \(\Uhsl{3}\) form an ordered set of algebra generators in which all
relations are quadratic relations amongst pairs of generators.
Moreover, the set of generators gives rise to a Poincar\'e--Birkhoff--Witt basis
\[
  \cB \coloneqq
    \big\{\H_1^{r_1}\H_2^{r_2}\Y_1^{s_1}\Y_{12}^{s_{12}}\Y_2^{s_2}\X_1^{t_1}\X_{12}^{t_{12}}\X_2^{t_2} : r_i, s_i, t_i \in \bbZ_{\geq 0}\big\}
\]
and straightening can be used  effectively to perform computations.
\end{example}

\subsection{Higher Degree Generators in \texorpdfstring{$\Uhsl{n+1}$}{Uhsln+1}}
When \(n \geq 3\), the \(\X_{i,i+1}\) are not enough to transform all
relations of \(\Uhsl{n+1}\) into commutation relations.
For example, the \(q\)-Serre relations yield cubic relations between
\(\X_{1,2}\) and \(\X_3\).
Iterating the reasoning of \S\ref{SS:Uhsl3-serre}, we see that we should
inductively define elements \(\X_{i,j}\) for each pair of indices
\(1 \leq i \leq j \leq n\), as follows:
for each \(i = 1,\ldots,n\), set \(\X_{i,i} \coloneqq \X_i\);
and for each pair of indices \(1 \leq i < j \leq n\), inductively define
\begin{equation}\label{D:Xij}
  \X_{i,j} \coloneqq \qr\X_i\X_{i+1,j} - \qbr\X_{i+1,j}\X_i.
\end{equation}
A short induction argument shows that the ideal of relations in \(\Uhsl{n+1}\)
are generated by quadratic relations amongst the algebra generators
\begin{equation}\label{e:prec-order}
  \H_1 \prec \cdots \H_2 \prec \cdots \prec \H_n \prec \Y_1 \prec \Y_{12} \prec \cdots \prec \Y_{1n} \prec \Y_2
    \prec \cdots \prec \Y_{n-1,n} \prec \Y_n \prec \X_1 \prec \X_{12} \prec \cdots \prec \X_{n-1,n} \prec \X_n.
\end{equation}
Moreover, the ordering above yields a Poincar\'e--Birkhoff--Witt basis as in
Example~\ref{ex:Uhsl3}.
This final statement can be established directly as in~\cite{r} or via the
general theory of quantum groups, as developed in~\cite[Chapter 40]{lus93}.

The elements \(\X_{i,j}\) have a more symmetric, and rather useful, description:

\begin{lemma}[Splitting]\label{L:splitting}
  Let \(1 \leq i < j \leq n\) and let \(s \in \{i,\ldots,j-1\}\).
  Then
  \[ \X_{i,j} = \qr\X_{i,s}\X_{s+1,j} - \qbr\X_{s+1,j}\X_{i,s}. \]
\end{lemma}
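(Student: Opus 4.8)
The plan is to induct on $s - i$, the "width" of the left block, with the base case $s = i$ being precisely the defining recursion~\eqref{D:Xij}. For the inductive step, suppose $s > i$ and that the splitting identity holds for all smaller values of the left-block width (for every admissible triple of indices). The natural move is to peel off $\X_i$ from the front of $\X_{i,s}$ using~\eqref{D:Xij}, namely $\X_{i,s} = \qr\X_i\X_{i+1,s} - \qbr\X_{i+1,s}\X_i$, substitute this into the right-hand side $\qr\X_{i,s}\X_{s+1,j} - \qbr\X_{s+1,j}\X_{i,s}$, and then regroup so that the inner expression $\qr\X_{i+1,s}\X_{s+1,j} - \qbr\X_{s+1,j}\X_{i+1,s}$ appears; by the induction hypothesis (applied with $i$ replaced by $i+1$) this inner expression equals $\X_{i+1,j}$. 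After collecting terms one should be left with $\qr\X_i\X_{i+1,j} - \qbr\X_{i+1,j}\X_i$, which is $\X_{i,j}$ by~\eqref{D:Xij} again.

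The one subtlety in that regrouping is that after substituting, the expression contains cross-terms such as $\X_{i+1,s}\X_i\X_{s+1,j}$ and $\X_{s+1,j}\X_i\X_{i+1,s}$, and to match things up one needs to commute $\X_i$ past $\X_{s+1,j}$. Here one must check that $\X_i$ and $\X_{s+1,j}$ actually commute: since $s+1 > i+1$, the index $i$ is not adjacent to any index in $\{s+1,\ldots,j\}$ — indeed $s+1 \ge i+2$ — so $\X_i$ commutes with each generator $\X_{s+1},\ldots,\X_j$, and a short induction on the width $j - (s+1)$ using~\eqref{D:Xij} shows $\X_i$ commutes with $\X_{s+1,j}$ as well. (This auxiliary commutation fact is worth stating as a small sublemma, or recording inline: $\X_a$ commutes with $\X_{b,c}$ whenever $a < b - 1$ or $a > c + 1$.) Granting this, the cross-terms cancel and the algebra is routine.

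I expect the main obstacle to be bookkeeping rather than anything conceptual: one must be careful about the order of the two factors in each product when regrouping, and about applying the induction hypothesis with the correct shifted indices. It may be cleanest to prove a slightly more general "associativity-type" statement first — that for $i \le s < t < j$ one has $\X_{i,j}$ expressed consistently whether one splits first at $s$ or first at $t$ — but the straight double induction (outer on $s-i$, inner auxiliary induction for the commutation fact) should suffice and is the more economical route. Throughout, the $\q$-Serre relations are used only implicitly, via the already-established fact from~\S\ref{SS:Uhsl3-serre} that the $\X_{i,j}$ satisfy the expected quadratic relations; the present lemma is a purely formal consequence of the definition~\eqref{D:Xij} together with the commutation of far-apart generators.
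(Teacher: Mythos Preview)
Your inductive argument is correct, but the paper takes a genuinely different route. Rather than inducting directly on $s-i$, the paper first establishes the combinatorial formula $\X_{i,j} = \sum_{D \in \cD_{i,j}} q^D \X_D$ (Lemma~\ref{L:CombDesc.Xij}), expressing $\X_{i,j}$ as a signed sum over orientations of the Dynkin path $P_{i,j}$; the Splitting Lemma then falls out in one line by observing that an orientation on $P_{i,j}$ decomposes into orientations on $P_{i,s}$ and $P_{s+1,j}$ together with a choice of direction on the edge $(s,s+1)$, the two directions accounting exactly for the two terms $\qr\X_{i,s}\X_{s+1,j}$ and $-\qbr\X_{s+1,j}\X_{i,s}$. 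Your approach is more economical if one only wants the splitting identity, and it isolates cleanly the one algebraic input needed (commutation of $\X_i$ with $\X_{s+1,j}$ for $s \ge i+1$). The paper's approach costs more to set up but yields the explicit monomial expansion of $\X_{i,j}$ as a by-product, and it handles the commutation issue implicitly by working with equivalence classes of monomials under far-apart commutations rather than invoking it as a separate sublemma.
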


To prove this, we give a formula for \(\X_{i,j}\) in terms of monomials in the
\(\X_i,\ldots,\X_j\) indexed by orientations on a path of length \(j - i\).
More precisely, let \(P_n\) denote the Dynkin diagram for \(\fsl_{n+1}\),
i.e. a path consisting of \(n\) vertices labelled from left to right by
the integers \(1\) to \(n\).
For \(1 \leq i < j \leq n\), let \(P_{i,j}\) denote the induced subgraph
of \(P\) obtained by taking the vertices labelled \(i,i+1,\ldots,j\).
Let \(\cD_{i,j}\) denote the set of orientations on \(P_{i,j}\).
For each orientation \(D \in \cD_{i,j}\), let
\begin{equation*}
  D_\rightarrow \coloneqq \#\{\ell \rightarrow \ell + 1 \in D\},
    \quad\text{and}\quad
  D_\leftarrow  \coloneqq \#\{\ell \leftarrow \ell + 1 \in D\}
\end{equation*}
be the number of right- and left-pointing arrows in the orientation \(D\).
For each \(D \in \cD_{i,j}\), set
\begin{equation*}
  q^D \coloneqq (-1)^{D_\leftarrow} q^{\frac{1}{2}(D_\rightarrow - D_\leftarrow)}.
\end{equation*}
For \(D \in \cD_{i,j}\), let \(\X_D\) be the monomial \(\X_i,\ldots,\X_j\)
constructed as follows:
Begin by writing \(\X_j\).
Next, if \(j - 1 \to j \in D\), then place \(\X_{j-1}\) to the \emph{left} of \(\X_j\);
otherwise, \(j-1 \leftarrow j \in D\) and so place \(\X_{j-1}\) on the \emph{right} of \(\X_j\).
Next, if \(j-2 \rightarrow j - 1 \in D\), then place \(\X_{j-2}\) at the leftmost end;
otherwise, place it on the rightmost.
At each step, regard a right-pointing arrow \(\ell - 1 \rightarrow \ell\) as
indicating that \(\X_{\ell - 1}\) and \(\X_\ell\) are to be positioned ``in
order'', so that \(\X_{\ell - 1}\) appears before \(\X_\ell\);
similarly, a left-pointing arrow \(\ell - 1 \leftarrow \ell\) indicates that
\(\X_{\ell - 1}\) and \(\X_\ell\) appear ``out of order''.
Continue this process until all of \(\X_j\) to \(\X_i\) have been placed and
call the result \(\X_D\).

\begin{example}
Suppose \(n = 8\), \(i = 2\) and \(j = 6\). Then
\[
  P_7 =
\begin{tikzpicture}[main node/.style={inner sep=0,minimum size =.12cm,circle,fill=black!80,draw},node distance = 0.75cm,scale=0.4]
  \node[main node] (1) [label=\(1\)] {};
  \node[main node] (2) [right of=1, label=\(2\)] {};
  \node[main node] (3) [right of=2, label=\(3\)] {};
  \node[main node] (4) [right of=3, label=\(4\)] {};
  \node[main node] (5) [right of=4, label=\(5\)] {};
  \node[main node] (6) [right of=5, label=\(6\)] {};
  \node[main node] (7) [right of=6, label=\(7\)] {};
  \draw (1) -- (2) -- (3) -- (4) -- (5) -- (6) -- (7);
\end{tikzpicture},
  \quad\text{and}\quad
  P_{2,6} =
\begin{tikzpicture}[main node/.style={inner sep=0,minimum size =.12cm,circle,fill=black!80,draw},node distance = 0.75cm,scale=0.4]
  \node[main node] (2) [right of=1, label=\(2\)] {};
  \node[main node] (3) [right of=2, label=\(3\)] {};
  \node[main node] (4) [right of=3, label=\(4\)] {};
  \node[main node] (5) [right of=4, label=\(5\)] {};
  \node[main node] (6) [right of=5, label=\(6\)] {};
  \draw (2) -- (3) -- (4) -- (5) -- (6);
\end{tikzpicture}.
\]
The set \(\cD_{2,6}\) has \(2^4 = 16\) elements, one being
$
  D =
\begin{tikzpicture}[main node/.style={inner sep=0,minimum size =.12cm,circle,fill=black!80,draw},node distance = 0.75cm,scale=0.4]
  \node[main node] (2) [right of=1, label=\(2\)] {};
  \node[main node] (3) [right of=2, label=\(3\)] {};
  \node[main node] (4) [right of=3, label=\(4\)] {};
  \node[main node] (5) [right of=4, label=\(5\)] {};
  \node[main node] (6) [right of=5, label=\(6\)] {};
  \draw[->-] (2) -- (3);
  \draw[-<-] (3) -- (4);
  \draw[->-] (4) -- (5);
  \draw[->-] (5) -- (6);
\end{tikzpicture}.
$
Then \(D_\rightarrow = 3\), \(D_\leftarrow = 1\), \(q^D = -q\). The
construction of \(\X_D\) proceeds through the following steps: \(\X_6\),
\(\X_5\X_6\), \(\X_4\X_5\X_6\), \(\X_4\X_5\X_6\X_3\), and finally
\(\X_D = \X_2\X_4\X_5\X_6\X_3\).
\end{example}

\begin{lemma}\label{L:CombDesc.Xij}
  Let \(1 \leq i < j \leq n\).
  Then \(\X_{i,j} = \sum_{D \in \cD_{i,j}} q^D\X_D\).
\end{lemma}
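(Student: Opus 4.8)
The plan is to prove Lemma~\ref{L:CombDesc.Xij} by induction on $j - i$, mirroring the inductive definition~\eqref{D:Xij} of $\X_{i,j}$. The base case $j = i$ is immediate: $\cD_{i,i}$ consists of the single (empty) orientation on the one-vertex path $P_{i,i}$, with $D_\rightarrow = D_\leftarrow = 0$, so $q^D = 1$ and $\X_D = \X_i = \X_{i,i}$. For the inductive step, I would take the definition $\X_{i,j} = \qr\X_i\X_{i+1,j} - \qbr\X_{i+1,j}\X_i$ and substitute the inductive hypothesis $\X_{i+1,j} = \sum_{D' \in \cD_{i+1,j}} q^{D'}\X_{D'}$ into it, obtaining
\[
  \X_{i,j} = \sum_{D' \in \cD_{i+1,j}} q^{D'}\bigl(\qr\X_i\X_{D'} - \qbr\X_{D'}\X_i\bigr).
\]

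The combinatorial heart of the argument is then to recognize the right-hand side as the sum over $\cD_{i,j}$. The key observation is that an orientation $D \in \cD_{i,j}$ is the same data as a pair $(D', \epsilon)$, where $D' \in \cD_{i+1,j}$ is the restriction of $D$ to the sub-path $P_{i+1,j}$ and $\epsilon \in \{\rightarrow, \leftarrow\}$ is the orientation of the single extra edge $i \,\text{---}\, i+1$. The construction of $\X_D$ from the excerpt builds the monomial by adding generators one at a time from $\X_j$ down to $\X_i$; the very last step places $\X_i$ at the leftmost end if $i \to i+1 \in D$ and at the rightmost end if $i \leftarrow i+1 \in D$. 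Hence $\X_D = \X_i \X_{D'}$ in the first case and $\X_D = \X_{D'}\X_i$ in the second, where $D'$ is the restriction of $D$. Simultaneously, one checks the weights: if $\epsilon = \rightarrow$ then $D_\rightarrow = D'_\rightarrow + 1$ and $D_\leftarrow = D'_\leftarrow$, so $q^D = q^{D'} \cdot q^{1/2} = \qr\, q^{D'}$; if $\epsilon = \leftarrow$ then $q^D = q^{D'}\cdot(-1)\cdot q^{-1/2} = -\qbr\, q^{D'}$. Matching these two cases against the two terms $\qr q^{D'}\X_i\X_{D'}$ and $-\qbr q^{D'}\X_{D'}\X_i$ in the displayed sum above gives exactly $\sum_{D \in \cD_{i,j}} q^D \X_D$, completing the induction.

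I do not expect a serious obstacle here: the lemma is essentially a bookkeeping identity unpacking the recursive definition. The one place requiring care is making the correspondence $D \leftrightarrow (D', \epsilon)$ and the "last step" in the construction of $\X_D$ genuinely rigorous — one must verify that the rule "place $\X_i$ at the leftmost end if $i \to i+1$, rightmost end otherwise" is consistent with the stated step-by-step procedure (in which at each stage the new generator goes to one of the two ends of the word built so far), and that the restriction $D'$ of $D$ produces precisely the word $\X_{D'}$. This is a direct consequence of the fact that the placement of $\X_\ell$ depends only on the orientation of the edge $\ell - 1 \,\text{---}\, \ell$ and the word already assembled from $\X_j, \ldots, \X_\ell$, so removing the final vertex $i$ leaves the construction for $D'$ untouched. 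Once this structural bijection is in place, the weight computation is a one-line check and the result follows. (Lemma~\ref{L:splitting} then follows by a similar but slightly more elaborate argument, splitting the path $P_{i,j}$ at the edge $s \,\text{---}\, s+1$ rather than peeling off a single vertex.)
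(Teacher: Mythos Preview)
Your proposal is correct and follows essentially the same approach as the paper: induction on $j-i$, substituting the inductive hypothesis into the recursive definition~\eqref{D:Xij}, and then identifying the two resulting sums with the orientations having $i \to i+1$ and $i \leftarrow i+1$ respectively. The only cosmetic difference is that the paper takes $j=i+1$ as the base case (matching the strict inequality $i<j$ in the statement), whereas you begin at the degenerate $j=i$; either starting point works.
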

\begin{proof}
  Proceed by induction on the difference \(j - i\).
  When \(j - i = 1\), the formula reduces to the definition of \(\X_{i,i+1}\)
  in~\eqref{D:Xij}.
  In general,
  \begin{align*}
    \X_{i,j}
      & = \qr\X_i\X_{i+1,j} - \qbr\X_{i+1,j}\X_i \\
      & = \sum_{D \in \cD_{i+1,j}} (-1)^{D_\leftarrow}q^{\frac{1}{2}((D_\rightarrow + 1)- D_\leftarrow)}\X_i\X_D + (-1)^{D_\leftarrow + 1}q^{\frac{1}{2}(D_\rightarrow - (D_\leftarrow + 1))}\X_D\X_i.
  \end{align*}
  Now observe that the first terms in the sum on the right hand side yield the
  monomials \(\X_{D'}\) with \(D' \in \cD_{i,j}\) such that the arrow between
  \(i\) and \(i + 1\) is right-pointing; likewise, the second terms in the sum
  correspond to \(D' \in \cD_{i,j}\) with arrows \(i \leftarrow i + 1\).
  Putting everything together gives the Lemma.
\end{proof}

\begin{remark}\label{R:good-reps}
Since \(\X_i\X_j = \X_j\X_i\) whenever \(\abs{i - j} > 1\), the \(\X_D\) are
simply convenient representatives of commutation equivalence class of monomials
in the \(\X_i\).
The essential information encoded by the orientation \(D \in \cD_{i,j}\) is the
relative position of adjacent generators: any monomial in the \(\X_i,\ldots,\X_j\)
such that \(\X_{l+1}\) is right of \(\X_l\) if and only if
\(l \rightarrow l+1 \in D\) is commutation equivalent to \(\X_D\).
\end{remark}

\begin{proof}[Proof of Lemma~\ref{L:splitting}]
  This follows immediately from Lemma~\ref{L:CombDesc.Xij} and
  Remark~\ref{R:good-reps} after observing that an orientation on \(P_{i,j}\)
  consists of orientations on \(P_{i,s}\) and \(P_{s+1,j}\) and an orientation
  on the edge \((s,s+1)\).
  The term \(\qr\X_{i,s}\X_{s+1,j}\) accounts for all orientations with \(s
    \rightarrow s+1\) where as \(-\qbr\X_{s+1,j}\X_{i,s}\) accounts for
  orientations with \(s \leftarrow s+1\).
\end{proof}

\subsection{Straightening in \texorpdfstring{$\Uhsl{n+1}$}{Uhsln+1}}
When straightening generators with the same subscript, all the straightening
laws developed in \S\ref{SS:Uhsl2Straightening} apply. For straightening terms
with different subscripts, we have the following set of ``mixed'' straightening
laws. The proofs of these are straightforward and are omitted.

\begin{lemma}\label{L:calcUhsl3}
  Let \(a \in \mathbb{Z}_{\geq 0}\), \(b \in \mathbb{Z}\),
  \(i,j,l \in \{1,\ldots,n\}\), and \(f(x)\) a formal power series in \(x\).
  Then the following identities hold in \(\Uhsl{n+1}\):
  \begin{align}
    \tag{i}\label{L:calcUhsl3.i}
      \X_if(\H_j) &
      =
      \begin{dcases*}
        f(\H_j + 1)\X_i, \\
        f(\H_j)\X_i,
      \end{dcases*}
        &
      \Y_if(\H_j) &
      =
      \begin{dcases*}
        f(\H_j - 1)\Y_i & if \(\abs{i - j} = 1\), \\
        f(\H_j)\Y_i & if \(\abs{i - j} > 1\),
      \end{dcases*} \\
    \tag{ii}\label{L:calcUhsl3.iii}
      \X_i^a\K_j^b &
      =
        \begin{dcases*}
          q^{\frac{1}{2}ab}\K_j^b\X_i^a, \\
          \K_j^b\X_i^a,
        \end{dcases*}
          &
      \Y_i^a\K_j^b &
      =
        \begin{dcases*}
          \qb^{\frac{1}{2}ab}\K_j^b\Y_i^a & if \(\abs{i - j} = 1\), \\
        \end{dcases*} \\
    \tag{iii}\label{L:calcUhsl3.ii}
      \X_{i,j}f(\H_l) &
      =
        \begin{dcases*}
          f(\H_l - 1)\X_{i,j}, \\
          f(\H_l - 1)\X_{i,j}, \\
          f(\H_l + 1)\X_{i,j}, \\
          f(\H_l)\X_{i,j},
        \end{dcases*}
          &
      \Y_{i,j}f(\H_j) &
      =
        \begin{dcases*}
          f(\H_l + 1)\Y_{i,j} & if \(l = i,j\), \\
          f(\H_l + 1)\Y_{i,j} & if \(j \neq i + 2\) and \(l = i+1,j-1\), \\
          f(\H_l - 1)\Y_{i,j} & if \(l = i-1,j+1\), \\
          f(\H_l)\Y_{i,j}     & otherwise,
        \end{dcases*} \\
    \tag{iv}\label{L:calcUhsl3.iv}
      \X_{i,j}^a\K_l^b &
      =
        \begin{dcases*}
          \qb^{\frac{1}{2}ab}\K_l^b\X_{i,j}^a, \\
          \qb^{\frac{1}{2}ab}\K_l^b\X_{i,j}^a, \\
          q^{\frac{1}{2}ab}\K_l^b\X_{i,j}^a, \\
          \K_l^b\X_{i,j}^a
        \end{dcases*}
          &
      \Y_{i,j}^a\K_j^b &
      =
        \begin{dcases*}
          q^{\frac{1}{2}ab}\K_l^b\Y_{i,j}^a & if \(l = i,j\), \\
          q^{\frac{1}{2}ab}\K_l^b\Y_{i,j}^a & if \(j \neq i + 2\) and \(l = i+1,j-1\), \\
          \qb^{\frac{1}{2}ab}\K_l^b\Y_{i,j}^a & if \(l = i-1,j+1\), \\
          \K_l^b\Y_{i,j}^a & otherwise,
        \end{dcases*} \\
    \tag{v}\label{L:calcUhsl3.v}
    \X_{i,j}^a\X_i^b &
    = \qb^{ab}\X_i^b\X_{i,j}^a, &
    \X_j^b\X_{i,j}^a &
    = \qb^{ab}\X_{i,j}^a\X_j^b.
  \end{align}
\end{lemma}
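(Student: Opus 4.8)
The plan is to reduce every identity to the defining relations~\eqref{e:Uhsl.rlns} and, for part~(v), to the $q$-Serre relations in the quadratic form obtained in~\S\ref{SS:Uhsl3-serre}, keeping the composite generators $\X_{ij}$, $\Y_{ij}$ under control through their recursive definition~\eqref{D:Xij} and the combinatorial description of Lemma~\ref{L:CombDesc.Xij}. The mechanism behind parts~(i)--(iv) is the observation --- already implicit in Lemma~\ref{L:calcUhSL2} --- that if $z \in \Uhsl{n+1}$ satisfies $[\H_l, z] = c\,z$ for an integer $c$, then $\H_l^m z = z(\H_l + c)^m$ by an immediate induction on $m$, so that $z\,f(\H_l) = f(\H_l - c)\,z$ for every formal power series $f$; specializing $f(x) = e^{bhx/4}$ and using $e^{\pm h/4} = \qr^{\pm 1}$ turns this into $z\,\K_l^b = \qr^{-bc}\,\K_l^b\,z$. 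Each identity is therefore governed by a single integer, the adjoint $\H_l$-eigenvalue of the generator being commuted.

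\emph{Parts~(i) and~(ii).} For $z = \X_i$ the eigenvalue is read straight off~\eqref{e:Uhsl.rlns}: $[\H_j, \X_i]$ equals $2\X_i$, $-\X_i$, or $0$ according as $j = i$, $\abs{i-j} = 1$, or $\abs{i-j} > 1$. Thus $\X_i f(\H_j) = f(\H_j + 1)\X_i$ when $\abs{i-j} = 1$ and $\X_i f(\H_j) = f(\H_j)\X_i$ when $\abs{i-j} > 1$, the case $j = i$ being Lemma~\ref{L:calcUhSL2}\eqref{L:calcUhSL2.ii}; the $\Y_i$ statements are identical with the eigenvalue negated. Part~(ii) is then the specialization $f(\H_j) = \K_j^b$ of part~(i), followed by raising to the $a$-th power.

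\emph{Parts~(iii) and~(iv).} The only extra ingredient is that each $\X_{ij}$ is a weight vector for the Cartan generators: $[\H_l, \X_{ij}] = \Sigma_l\,\X_{ij}$ with $\Sigma_l \coloneqq \sum_{k=i}^{j} c_{kl}$, where $c_{kl}$ is the integer defined by $[\H_l, \X_k] = c_{kl}\X_k$. This is transparent from Lemma~\ref{L:CombDesc.Xij}, since every monomial $\X_D$ occurring there is a product of $\X_i, \ldots, \X_j$ in which each factor is used exactly once, so all such monomials carry the same adjoint $\H_l$-eigenvalue; alternatively it follows by induction on $j - i$ from~\eqref{D:Xij} together with the derivation property $[\H_l, AB] = [\H_l, A]\,B + A\,[\H_l, B]$. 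A direct computation from~\eqref{e:Uhsl.rlns} gives $\Sigma_l = 1$ if $l \in \{i, j\}$, $\Sigma_l = -1$ if $l \in \{i-1, j+1\}$, and $\Sigma_l = 0$ otherwise, whence parts~(iii) and~(iv) follow exactly as in~(i), (ii) with $c = \Sigma_l$. The generators $\Y_{ij}$, defined analogously, are treated identically with all adjoint $\H_l$-eigenvalues negated.

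\emph{Part~(v).} This is the only place the $q$-Serre relations enter, and the only step with any content; even there the difficulty is organizational. The case $j = i + 1$ is immediate from~\S\ref{SS:Uhsl3-serre}: the rewritten Serre relations there read $\X_{i,i+1}\X_i = \qb\,\X_i\X_{i,i+1}$ and $\X_{i,i+1}\X_{i+1} = q\,\X_{i+1}\X_{i,i+1}$. For $j \geq i + 2$ I would invoke the Splitting Lemma~\ref{L:splitting} at a cut point that peels off a factor commuting outright with the relevant simple generator: for the first identity, split at $s = i+1$, so $\X_{ij} = \qr\X_{i,i+1}\X_{i+2,j} - \qbr\X_{i+2,j}\X_{i,i+1}$, where $\X_{i+2,j}$ involves only $\X_{i+2}, \ldots, \X_j$ and so commutes with $\X_i$ while $\X_{i,i+1}\X_i = \qb\,\X_i\X_{i,i+1}$ is the base case; substituting and collecting the scalars $\qr, \qbr, \qb$ yields $\X_{ij}\X_i = \qb\,\X_i\X_{ij}$. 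Splitting instead at $s = j - 2$ isolates $\X_{i,j-2}$, which commutes with $\X_j$, and together with the base relation $\X_{j-1,j}\X_j = q\,\X_j\X_{j-1,j}$ gives $\X_{ij}\X_j = q\,\X_j\X_{ij}$, i.e.\ $\X_j\X_{ij} = \qb\,\X_{ij}\X_j$. Iterating these yields the stated power forms $\X_{ij}^a\X_i^b = \qb^{ab}\X_i^b\X_{ij}^a$ and $\X_j^b\X_{ij}^a = \qb^{ab}\X_{ij}^a\X_j^b$. The one point requiring care is exactly this choice of cut point --- peeling off a factor commuting with $\X_i$ (resp.\ $\X_j$) and leaving the short root vector $\X_{i,i+1}$ (resp.\ $\X_{j-1,j}$) for the base case --- which is why the case $j = i+1$ has to be handled directly; everything else reduces, generator by generator, to the single eigenvalue computation above.
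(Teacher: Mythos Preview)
Your proof is correct. The paper omits the proof of this lemma entirely, stating only that ``the proofs of these are straightforward and are omitted.'' Your argument---reducing parts (i)--(iv) to a single adjoint-eigenvalue computation (a weight vector $z$ with $[\H_l,z]=cz$ satisfies $z\,f(\H_l)=f(\H_l-c)\,z$, and specializing to $f=\K_l^b$) and handling part (v) through the Splitting Lemma~\ref{L:splitting} with cut points chosen to peel off a factor commuting with the relevant simple generator---is exactly the straightforward route the authors have in mind.
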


Interesting combinatorial structure appears when commuting \(\X_i\) through
terms like \(\X_{i+1,j}\) or \(\Y_{i,j}\).
In doing so, the relations of \(\Uhsl{n+1}\) either \emph{lengthen} or
\emph{shorten} the interval indexing \(\X_{i+1,j}\) or \(\Y_{i,j}\).

\begin{lemma}[Lengthening]\label{L:lengthenUhsln}
  Let \(1 \leq i < j \leq n\).
  Then, in \(\Uhsl{n+1}\),
  \begin{align}
    \tag{i}\label{L:lengthenUhsln.i}
      \X_{i+1,j}^{n_{i+1,j}}\X_i
        & = q^{n_{i+1,j}}\X_i\X_{i+1,j} - \qr[n_{i+1,j}]\,\X_{i,j}\X_{i+1,j}^{n_{i+1,j}-1}, \\
    \tag{ii}\label{L:lengthenUhsln.ii}
      \X_j\X_{i,j-1}^{n_{i,j-1}}
        & = q^{n_{i,j-1}}\X_{i,j-1}^{n_{i,j-1}}\X_j - \qr[n_{i,j-1}]\,\X_{i,j-1}^{n_{i,j-1} - 1}\X_{i,j}.
  \end{align}
\end{lemma}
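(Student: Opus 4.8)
The plan is to prove both identities by induction on the exponent, with the defining recursion~\eqref{D:Xij} and the Splitting Lemma~\ref{L:splitting} supplying the base cases and a pair of $q$-commutation relations between ``nested'' generators supplying the inductive glue. Throughout, abbreviate $m \coloneqq n_{i+1,j}$ in part~\eqref{L:lengthenUhsln.i} and $m \coloneqq n_{i,j-1}$ in part~\eqref{L:lengthenUhsln.ii}.

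For~\eqref{L:lengthenUhsln.i}, the case $m = 1$ is a rearrangement of~\eqref{D:Xij}: from $\X_{i,j} = \qr\X_i\X_{i+1,j} - \qbr\X_{i+1,j}\X_i$ one reads off $\X_{i+1,j}\X_i = q\,\X_i\X_{i+1,j} - \qr\X_{i,j}$, which is the asserted formula (reading the first term of~\eqref{L:lengthenUhsln.i} as $q^{m}\X_i\X_{i+1,j}^{m}$). For the inductive step I would peel one factor of $\X_{i+1,j}$ off the left, apply the induction hypothesis to $\X_{i+1,j}^{m-1}\X_i$, and then left-multiply by $\X_{i+1,j}$; the factor $\X_{i+1,j}\X_i$ is rewritten by the $m = 1$ relation and the factor $\X_{i+1,j}\X_{i,j}$ by the commutation $\X_{i+1,j}\X_{i,j} = \qb\,\X_{i,j}\X_{i+1,j}$. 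Collecting terms, $\X_i\X_{i+1,j}^{m}$ appears with coefficient $q^{m}$ and $\X_{i,j}\X_{i+1,j}^{m-1}$ with coefficient $-\qr\bigl(q^{m-1} + \qb\,[m-1]\bigr)$, which equals $-\qr[m]$ by the elementary quantum-integer identity $q^{m-1} + \qb[m-1] = [m]$; this is the claimed equation. Part~\eqref{L:lengthenUhsln.ii} is the left--right mirror: its base case $m = 1$ is $\X_j\X_{i,j-1} = q\,\X_{i,j-1}\X_j - \qr\X_{i,j}$, obtained from the Splitting Lemma~\ref{L:splitting} with $s = j - 1$, and the inductive step peels a factor of $\X_{i,j-1}$ off the \emph{right} and closes using $\X_{i,j}\X_{i,j-1} = \qb\,\X_{i,j-1}\X_{i,j}$ together with the same scalar identity.

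The step I expect to require the most care is establishing the two auxiliary commutation relations $\X_{i+1,j}\X_{i,j} = \qb\,\X_{i,j}\X_{i+1,j}$ and $\X_{i,j}\X_{i,j-1} = \qb\,\X_{i,j-1}\X_{i,j}$ between generators whose index intervals are nested and share an endpoint; it is precisely their availability that forces the two ``carried'' terms in each induction to coalesce into a single $-\qr[m]$. When $j = i + 1$ these are instances of Lemma~\ref{L:calcUhsl3}\eqref{L:calcUhsl3.v}; for general $j - i$ they are among the quadratic relations that hold on the generating set~\eqref{e:prec-order} (a fact established via~\cite{r} or~\cite[Chapter~40]{lus93}), and can alternatively be verified by a short induction on $j - i$ using the Splitting Lemma~\ref{L:splitting}. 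Once these commutations are recorded as a preliminary observation, the two inductions above run entirely in parallel and what remains is the routine scalar bookkeeping indicated above.
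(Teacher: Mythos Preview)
Your proposal is correct and matches the paper's approach: the paper says these lemmas ``are proved using the Splitting Lemma~\ref{L:splitting} when the exponent is \(1\)'' and that ``the inductive technique from Lemma~\ref{L:calcUhSL2} can then be used to establish the general case,'' which is precisely the base-case-plus-induction scheme you outline (and you correctly read the first term of~\eqref{L:lengthenUhsln.i} as \(q^{m}\X_i\X_{i+1,j}^{m}\), repairing an evident typo in the statement). Where you go slightly beyond the paper is in isolating the auxiliary \(q\)-commutation relations \(\X_{i+1,j}\X_{i,j} = \qb\,\X_{i,j}\X_{i+1,j}\) and \(\X_{i,j}\X_{i,j-1} = \qb\,\X_{i,j-1}\X_{i,j}\) as the crucial ingredient of the inductive step; the paper leaves these implicit in its blanket assertion that the generators in~\eqref{e:prec-order} satisfy only quadratic relations, so your explicit flagging of this point (and the sketch of how to verify it via the Splitting Lemma) is a useful clarification rather than a departure.
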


\begin{lemma}[Shortening]\label{L:shortenUhsln}
  Let \(1 \leq i < j \leq n\). Then, in \(\Uhsl{n+1}\),
  \begin{align}
    \tag{i}\label{L:shortenUhsln.i}
      \Y_{i,j}^{n_{i,j}}\X_i
        & = \X_i\Y_{i,j}^{n_{i,j}} - \qbr [n_{i,j}]\,\K_i^2\Y_{i,j}^{n_{i,j} - 1}\Y_{i+1,j}, \\
    \tag{ii}\label{L:shortenUhsln.ii}
      \X_j\Y_{i,j}^{n_{i,j}}
        & = \Y_{i,j}^{n_{i,j}}\X_j - \qr\qb^{n_{i,j} - 1}[n_{i,j}]\,\Kb_j^2\Y_{i,j-1}\Y_{i,j}^{n_{i,j}-1}.
  \end{align}
\end{lemma}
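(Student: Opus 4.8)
The plan is to prove both identities by induction on the exponent $n_{i,j}$, reducing everything to the case $n_{i,j}=1$ (a single straightening rule for $\Y_{i,j}$), the elementary straightening laws of Lemma~\ref{L:calcUhsl3}, and two $q$-commutations between overlapping interval generators. I would treat~\eqref{L:shortenUhsln.i} in detail; \eqref{L:shortenUhsln.ii} is entirely parallel with the two endpoints interchanged, and both are proved by the same mechanism as the companion Lemma~\ref{L:lengthenUhsln}.

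For the base case of~\eqref{L:shortenUhsln.i}, I would expand $\Y_{i,j}$ through the $\Y$-analogue of the defining recursion~\eqref{D:Xij}, as a $q^{\pm 1/2}$-weighted difference of $\Y_i\Y_{i+1,j}$ and $\Y_{i+1,j}\Y_i$. Since $\X_i$ commutes with each of $\Y_{i+1},\dots,\Y_j$ as an algebra element, it commutes with $\Y_{i+1,j}$, so the only obstruction to moving $\X_i$ past $\Y_{i,j}$ comes from the single factor $\Y_i$, via $\Y_i\X_i = \X_i\Y_i - [\H_i]$ with $[\H_i] = \tfrac{\K_i^2-\Kb_i^2}{q-\qb}$. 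Carrying this through collapses the two leading terms back into $\X_i\Y_{i,j}$ and leaves a remainder $-\qr\,[\H_i]\Y_{i+1,j} + \qbr\,\Y_{i+1,j}[\H_i]$. Moving $[\H_i]$ past $\Y_{i+1,j}$ by Lemma~\ref{L:calcUhsl3}\eqref{L:calcUhsl3.i} and~\eqref{L:calcUhsl3.iii} --- for the interval $[i+1,j]$ the index $i$ sits one below the left endpoint, so $\Y_{i+1,j}\K_i^2 = \qb\K_i^2\Y_{i+1,j}$ and $\Y_{i+1,j}\Kb_i^2 = q\Kb_i^2\Y_{i+1,j}$ --- the $\Kb_i^2$ contributions cancel identically, while the $\K_i^2$ contributions combine through $\qbr\qb-\qr = -\qbr(q-\qb)$ to give precisely $-\qbr\K_i^2\Y_{i+1,j}$, which is the $n_{i,j}=1$ instance.

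For the inductive step, write $\Y_{i,j}^{n}\X_i = \Y_{i,j}^{n-1}\bigl(\Y_{i,j}\X_i\bigr)$, substitute the base case, and apply the induction hypothesis to $\Y_{i,j}^{n-1}\X_i$. Reconciling the two remainder terms needs only: (a)~$\Y_{i,j}\K_i^2 = q\K_i^2\Y_{i,j}$, since $\Y_{i,j}$ has $\H_i$-weight $-1$ (Lemma~\ref{L:calcUhsl3}\eqref{L:calcUhsl3.iv}); and (b)~the $q$-commutation $\Y_{i+1,j}\Y_{i,j} = \qb\,\Y_{i,j}\Y_{i+1,j}$ between the two interval generators sharing the right endpoint $j$. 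With these in hand the remainder equals $-\qbr\bigl(\qb\,[n-1]+q^{n-1}\bigr)\K_i^2\Y_{i,j}^{n-1}\Y_{i+1,j}$, and $\qb\,[n-1]+q^{n-1}=[n]$ closes the induction. Part~\eqref{L:shortenUhsln.ii} runs the same way using the splitting $\Y_{i,j} = \qr\Y_{i,j-1}\Y_j - \qbr\Y_j\Y_{i,j-1}$ (Lemma~\ref{L:splitting}), the commutator $\lbb{\X_j}{\Y_j}=[\H_j]$, the fact that $\X_j$ commutes with $\Y_{i,j-1}$, the relations $\Y_{i,j-1}\K_j^2 = \qb\K_j^2\Y_{i,j-1}$ and $\Y_{i,j}\Kb_j^2 = \qb\Kb_j^2\Y_{i,j}$ from Lemma~\ref{L:calcUhsl3}, and the left-endpoint commutation $\Y_{i,j}\Y_{i,j-1} = \qb\,\Y_{i,j-1}\Y_{i,j}$; the accumulating $\qb$'s produce the factor $\qb^{n_{i,j}-1}$, the relevant scalar identity now being $\qb^{n-1}[n] = 1 + \qb^{n}[n-1]$.

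The one genuinely delicate point is the $q$-commutation between $\Y_{i,j}$ and each of the shorter generators $\Y_{i+1,j}$, $\Y_{i,j-1}$ obtained by deleting an endpoint: these generalize the one-step relations of Lemma~\ref{L:calcUhsl3}\eqref{L:calcUhsl3.v} (in their $\Y$ form), and I would establish them by a short induction on the interval length from the Splitting Lemma~\ref{L:splitting} and the recursion~\eqref{D:Xij}, or else invoke them as instances of the quadratic relations on the PBW generators listed at~\eqref{e:prec-order}. Everything else is routine tracking of half-integer powers of $q$, the only load-bearing computations being the base-case cancellation above and the two $q$-integer identities $[n]=\qb[n-1]+q^{n-1}$ and $\qb^{n-1}[n]=1+\qb^{n}[n-1]$.
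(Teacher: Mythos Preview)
Your proposal is correct and follows essentially the same route the paper sketches: establish the exponent-$1$ case from the Splitting Lemma (you use the special case~\eqref{D:Xij}, which suffices), then induct on the exponent in the style of Lemma~\ref{L:calcUhSL2}\eqref{L:calcUhSL2.v}. Your identification of the auxiliary $q$-commutations $\Y_{i+1,j}\Y_{i,j}=\qb\,\Y_{i,j}\Y_{i+1,j}$ and $\Y_{i,j}\Y_{i,j-1}=\qb\,\Y_{i,j-1}\Y_{i,j}$ as the one point requiring separate justification is apt---the paper's proof sketch glosses over this, but it is indeed needed, and your suggested derivation via the Splitting Lemma and induction on interval length is the right way to get it.
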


When the exponent is \(1\), these are proven using the Splitting
Lemma~\ref{L:splitting}.
The general case is handled via the inductive technique from
Lemma~\ref{L:calcUhSL2}.

We also have the following commutation relation when the indexing interval of a
some generator is completely contained in that of another.
In these cases, all relations end up being commutation relations, possibly with
some scalar factor.
The proofs use the Splitting Lemma~\ref{L:splitting} to reduce the statement to
a computation in \(\Uhsl{4}\).

\begin{lemma}[Passing]\label{L:passingUhsln}
  Let \(1 \leq i < j \leq n\).
  Then the following hold in \(\Uhsl{n+1}\).
  \begin{align*}
    \tag{i}\label{L:passingUhsln.i} \X_{i,j}^{n_{i,j}}\X_i^{n_i} & = \qb^{n_in_{i,j}}\X_i^{n_i}\X_{i,j}^{n_{i,j}}, & \X_j^{n_j}\X_{i,j}^{n_{i,j}} & = \qb^{n_{i,j}n_j}\X_{i,j}^{n_{i,j}}\X_j^{n_j}.
  \intertext{Moreover, for any \(i < s < j\),}
    \tag{ii}\label{L:passingUhsln.ii}\X_s^{n_s}\X_{i,j}^{n_{i,j}} & = \X_{i,j}^{n_{i,j}}\X_s^{n_s}, & \X_s^{n_s}\Y_{i,j}^{n_{i,j}} & = \Y_{i,j}^{n_{i,j}}\X_s^{n_s}.
  \end{align*}
\end{lemma}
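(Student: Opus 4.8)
The plan is to peel off the exponents first, settle part~(i) by a short induction built on the rewritten $\q$-Serre relations, and handle part~(ii) by using the Splitting Lemma~\ref{L:splitting} to transport the statement into $\Uhsl{4}$, where it becomes a bounded computation. For the exponents: in part~(i) it suffices to establish $\X_{i,j}\X_i = \qb\,\X_i\X_{i,j}$ and $\X_j\X_{i,j} = \qb\,\X_{i,j}\X_j$, since moving one factor at a time then yields $\X_{i,j}^{n_{i,j}}\X_i^{n_i} = \qb^{\,n_in_{i,j}}\X_i^{n_i}\X_{i,j}^{n_{i,j}}$ and its companion; in part~(ii) the unit-exponent statements $\X_s\X_{i,j} = \X_{i,j}\X_s$ and $\X_s\Y_{i,j} = \Y_{i,j}\X_s$ are plain commutations, so the general identities are immediate.

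For part~(i) I would induct on $j-i$. The base case $j = i+1$ is precisely the pair of relations recorded at the end of \S\ref{SS:Uhsl3-serre}, namely $\qbr\X_i\X_{i,i+1} - \qr\X_{i,i+1}\X_i = 0$ and $\qbr\X_{i,i+1}\X_{i+1} - \qr\X_{i+1}\X_{i,i+1} = 0$. For the inductive step, write $\X_{i,j} = \qr\X_{i,j-1}\X_j - \qbr\X_j\X_{i,j-1}$ (Lemma~\ref{L:splitting}, split at $j-1$); since $j \geq i+2$ the generators $\X_i$ and $\X_j$ commute, so substituting the inductive hypothesis $\X_{i,j-1}\X_i = \qb\X_i\X_{i,j-1}$ and collecting terms gives $\X_{i,j}\X_i = \qb\X_i\X_{i,j}$. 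The identity $\X_j\X_{i,j} = \qb\X_{i,j}\X_j$ follows symmetrically from the split $\X_{i,j} = \qr\X_i\X_{i+1,j} - \qbr\X_{i+1,j}\X_i$. (In particular this re-derives Lemma~\ref{L:calcUhsl3}\eqref{L:calcUhsl3.v}.)

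For part~(ii), fix $i < s < j$ and apply Lemma~\ref{L:splitting} twice to express $\X_{i,j}$ as a fixed noncommutative polynomial $\X_{i,j} = q\,ABC - BAC - ACB + \qb\,CBA$ in $A \coloneqq \X_{i,s-1}$, $B \coloneqq \X_s$, $C \coloneqq \X_{s+1,j}$, with $\X_{i,s} = \qr AB - \qbr BA$ and $\X_{s,j} = \qr BC - \qbr CB$. Now $A$ and $C$ commute, since $\X_a$ and $\X_b$ commute whenever $a \leq s-1 < s+1 \leq b$; and the remaining straightening relations needed to move $B$ through this polynomial — $B$ past $A$ and past $C$ (Lemma~\ref{L:lengthenUhsln}), and $\X_{i,s}$, $\X_{s,j}$ past $B$ — coincide with the relations holding among the root vectors $\X_1,\X_2,\X_3,\X_{12},\X_{23}$ of $\Uhsl{4}$. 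Thus $\X_s\X_{i,j} = \X_{i,j}\X_s$ reduces to the identity $\X_2\X_{13} = \X_{13}\X_2$ in $\Uhsl{4}$, which one checks by expanding $\X_{13} = q\X_1\X_2\X_3 - \X_2\X_1\X_3 - \X_3\X_1\X_2 + \qb\X_3\X_2\X_1$ and commuting $\X_2$ through, using $[\X_2,\X_1] = [\X_2,\X_3] = 0$ and the $\q$-Serre relations. For $\X_s$ against $\Y_{i,j}$ the same reduction applies, with $\Y_{i,j}$ rewritten \emph{via} the $\Y$-counterpart of Lemma~\ref{L:splitting} in terms of $\Y_{i,s-1},\Y_s,\Y_{s+1,j}$; the target identity is $\X_2\Y_{13} = \Y_{13}\X_2$ in $\Uhsl{4}$. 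Here $[\X_2,\Y_2] = \frac{\K_2^2 - \Kb_2^2}{q-\qb} \neq 0$, so commuting $\X_2$ past the four monomials of $\Y_{13}$ throws off four $\K_2^{\pm 2}$ correction terms; after pushing each $\K_2^{\pm2}$ past the remaining $\Y_1,\Y_3$ by Lemma~\ref{L:calcUhsl3}\eqref{L:calcUhsl3.iv} and summing, these four corrections cancel.

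The step I expect to be the main obstacle is the assertion used in part~(ii) that the composite root vectors $\X_{i,s-1},\X_s,\X_{s+1,j}$ (together with $\X_{i,s},\X_{s,j},\X_{i,j}$) satisfy among themselves exactly the relations of the corresponding root vectors in $\Uhsl{4}$ — equivalently, that they span a homomorphic image of the positive part of $\Uhsl{4}$; making this precise means assembling the relevant Splitting, Lengthening and Shortening relations and verifying that no further relations intervene. For the $\Y_{i,j}$ case there is the added bookkeeping of the $\K_2^{\pm2}$ terms, whose cancellation is the real content of that computation.
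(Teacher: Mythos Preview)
Your approach matches the paper's: part~(i) coincides with Lemma~\ref{L:calcUhsl3}\eqref{L:calcUhsl3.v}, and for part~(ii) the paper likewise says only that one uses the Splitting Lemma to reduce to a computation in \(\Uhsl{4}\). One slip to fix: in checking \(\X_2\X_{13} = \X_{13}\X_2\) you wrote ``\([\X_2,\X_1] = [\X_2,\X_3] = 0\)'', but adjacent simple root vectors do not commute --- you mean \([\X_1,\X_3] = 0\) (your earlier \(AC = CA\)) together with the \(\q\)-Serre relations for the pairs \((\X_1,\X_2)\) and \((\X_2,\X_3)\).
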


Finally, we have the following analogue of Lemma~\ref{L:1otX} for moving
generators past the exponential factor in the \(\sR\)-matrix.

\begin{lemma}\label{L:expUhsl3}
Let \(f(x)\) be a formal power series in \(x\), \(a \in \mathbf{Z}_{> 0}\),
\(\kappa \in \mathbf{Z}\) any integer and \(i,j,l \in \{1,\ldots,n\}\). Then the
following hold in \(\Uhsl{n+1}\):
\begin{align*}
      f(1 \otimes \X_i^a)e^{\kappa\frac{h}{4}\H_l \otimes \H_j}
        & = \begin{dcases*}
          e^{\kappa\frac{h}{4}\H_l \otimes \H_j} f(\Kb_l^{2a\kappa} \otimes \X_i^a), \\
          e^{\kappa\frac{h}{4}\H_l \otimes \H_j} f(\K_l^{2a\kappa} \otimes \X_i^a), \\
          e^{\kappa\frac{h}{4}\H_l \otimes \H_j} f(1 \otimes \X_i^a),
        \end{dcases*} &
      f(1 \otimes \Y_i^a)e^{\kappa\frac{h}{4}\H_l \otimes \H_j} &
        = \begin{dcases*}
          e^{\kappa\frac{h}{4}\H_l \otimes \H_j} f(\K_l^{2a\kappa} \otimes \Y_i^a), & if \(j = i\), \\
          e^{\kappa\frac{h}{4}\H_l \otimes \H_j} f(\Kb_l^{2a\kappa} \otimes \Y_i^a), & if \(\abs{j - i} = 1\), \\
          e^{\kappa\frac{h}{4}\H_l \otimes \H_j} f(1 \otimes \Y_i^a), & otherwise,
        \end{dcases*}
\end{align*}
and similarly for \(f(\X_i^n \otimes 1)\) and \(f(\Y_i^n \otimes 1)\).
\end{lemma}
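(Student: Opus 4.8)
The plan is to reproduce, almost verbatim, the argument behind Lemma~\ref{L:1otX}. First I would reduce to the case of a monomial $f$: both sides of each claimed identity are $\bbC[\![h]\!]$-linear and $h$-adically continuous in the coefficients of $f$, so it suffices to treat $f(x)=x^m$, in which case $f(1\otimes\X_i^a)=1\otimes\X_i^{am}$. Writing $b\coloneqq am$ and expanding the exponential as a power series, one gets
\[
  (1\otimes\X_i^{b})\,e^{\kappa\frac{h}{4}\H_l\otimes\H_j}
    = \sum_{r\ge0}\frac{1}{r!}\Bigl(\kappa\tfrac{h}{4}\Bigr)^{r}\H_l^{r}\otimes\X_i^{b}\H_j^{r}.
\]

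The one substantive step is the straightening of $\X_i^{b}\H_j^{r}$. Iterating the commutation rules of Lemma~\ref{L:calcUhsl3}\eqref{L:calcUhsl3.i} gives an integer $\mu$ that depends only on the position of $i$ relative to $j$ in the Dynkin diagram (and vanishes when $\abs{i-j}>1$), such that $\X_i^{b}f(\H_j)=f(\H_j+b\mu)\,\X_i^{b}$ for every power series $f$; taking $f(x)=x^{r}$ yields $\X_i^{b}\H_j^{r}=(\H_j+b\mu)^{r}\X_i^{b}$. Substituting this into the sum above and recognising the result as an exponential series (using $(\H_l\otimes(\H_j+b\mu))^{r}=\H_l^{r}\otimes(\H_j+b\mu)^{r}$) gives
\[
  (1\otimes\X_i^{b})\,e^{\kappa\frac{h}{4}\H_l\otimes\H_j}
    = e^{\kappa\frac{h}{4}\,\H_l\otimes(\H_j+b\mu)}\,(1\otimes\X_i^{b}).
\]

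Finally I would split the exponent, $\H_l\otimes(\H_j+b\mu)=\H_l\otimes\H_j+b\mu\,(\H_l\otimes1)$; since all the $\H_i$ commute, so do these two summands, and the exponential factors as $e^{\kappa\frac{h}{4}\H_l\otimes\H_j}\cdot\bigl(e^{\kappa\frac{h}{4}b\mu\,\H_l}\otimes1\bigr)=e^{\kappa\frac{h}{4}\H_l\otimes\H_j}\cdot\bigl(\K_l^{\kappa b\mu}\otimes1\bigr)$, using $\K_l=e^{\frac{h}{4}\H_l}$. Hence $(1\otimes\X_i^{b})\,e^{\kappa\frac{h}{4}\H_l\otimes\H_j}=e^{\kappa\frac{h}{4}\H_l\otimes\H_j}\,\bigl(\K_l^{\kappa b\mu}\otimes\X_i^{b}\bigr)$, and since $b=am$ the trailing factor equals $(\K_l^{\kappa a\mu}\otimes\X_i^{a})^{m}=f(\K_l^{\kappa a\mu}\otimes\X_i^{a})$; writing $\K_l^{-1}=\Kb_l$ when $\mu<0$ and inserting the value of $\mu$ for the three cases $j=i$, $\abs{i-j}=1$, and $\abs{i-j}>1$ recovers the stated formula. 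The $\Y_i$ identity is the same computation with the corresponding shift constant for $\Y_i$ from Lemma~\ref{L:calcUhsl3}\eqref{L:calcUhsl3.i}, which has the opposite sign and so exchanges $\K_l$ with $\Kb_l$; and the two first-slot statements $f(\X_i^{a}\otimes1)$, $f(\Y_i^{a}\otimes1)$ follow from the mirror-image computation in which the straightening takes place in the first tensor factor and the resulting power of $\K_j$ (resp. $\Kb_j$) is deposited in the second. I expect no genuine obstacle: the whole argument rests on the single type of commutation rule in Lemma~\ref{L:calcUhsl3}\eqref{L:calcUhsl3.i} together with additivity of exponents of commuting operators, and the only thing needing care is the clerical case-split over which shift constant occurs.
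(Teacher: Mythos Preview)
The paper gives no proof of this lemma (nor of its \(\fsl_2\) antecedent Lemma~\ref{L:1otX}), simply recording it as the evident analogue; your argument---reduce to monomials, expand the exponential, apply the shift \(\X_i^{b}f(\H_j)=f(\H_j+b\mu)\X_i^{b}\), and re-sum into \(e^{\kappa\frac{h}{4}\H_l\otimes\H_j}\cdot(\K_l^{\kappa b\mu}\otimes 1)\)---is exactly the intended filling-in and is correct. One clerical remark: Lemma~\ref{L:calcUhsl3}\eqref{L:calcUhsl3.i} only records the mixed cases \(\abs{i-j}\ge 1\), so for \(j=i\) the shift \(\mu=-2\) comes from Lemma~\ref{L:calcUhSL2}\eqref{L:calcUhSL2.ii} applied to the \(i\)-th \(\fsl_2\)-triple.
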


\section{An \texorpdfstring{$\sR$}{R}-matrix for \texorpdfstring{$\Uhsl{n+1}$}{Uhsln+1}}\label{S:Uhsl3-coeffs}
We apply the formalism developed in \S\ref{S:Uhsln} to construct an
\(\sR\)-matrix for \(\Uhsl{n+1}\), following the ideas of \S\ref{S:Rmorphism}.
For other calculations of \(\sR\)-matrix, see~\cite{bu,kt91}.

\subsection{An ansatz}\label{RmatrixSLN1}
Reasoning similar to that in~\S\ref{SSS:R-ansatz} can be used to construct
an ansatz for \(\sR\) in the \(\Uhsl{n+1}\) case.
Alternatively, and more efficiently, we use the general principle that objects
in semisimple Lie algebras can be built by appropriately combining ingredients
from constituent \(\fsl_2\) to obtain an ansatz for \(\sR\) here.
This principle, together with the form of \(\sR\) given in
Theorem~\ref{T:Rmorphism1}, leads us to propose the following ansatz for the
form of \(\sR\) in \(\Uhsl{n+1}\):
\begin{equation}\label{e:Uhsl3.Ansatz.1}
  \sR \coloneqq \bfe^K\sum_{\bm} \alpha(\bm)\, \KK(\bm)\XX(\bm)\otimes \KKb(\bm)\YY(\bm)
\end{equation}
where: \(\bm \coloneqq (m_1,m_{1,2},m_{1,2,3},\ldots,m_{n-1,n},m_n)\) is a vector
of integers ordered as the generators are ordered in~\eqref{e:prec-order};
the coefficient \(\alpha(\bm)\) is a rational function in \(q\); the
\(\KK\), \(\KKb\), \(\XX\), and \(\YY\) are defined by
\begin{align*}
  \XX(\bm) & \coloneqq \X_1^{m_1}\X_{12}^{m_{12}} \cdots \X_{n-1,n}^{m_{n-1,n}}\X_n^{m_n},
    &
  \KK(\bm) & \coloneqq \K_1^{m_1}\K_{12}^{m_{12}} \cdots \K_n^{m_n},
    \\
  \YY(\bm) & \coloneqq \Y_1^{m_1}\Y_{12}^{m_{12}} \cdots \Y_{n-1,n}^{m_{n-1,n}}\X_n^{m_n},
    &
  \KKb(\bm) & \coloneqq \Kb_1^{m_1} \Kb_{12}^{m_{12}} \cdots \Kb_n^{m_n}.
\end{align*}
for ordered products of the generators in \(\Uhsl{n+1}\);
\(\K_{i,j} \coloneqq \K_i\K_{i+1} \cdots \K_j\); and
\[ \mathbf{e}^K \coloneqq \exp\left(\sum_{i,j = 1}^n \kappa_{i,j} \H_i \otimes \H_j\right) \]
for some matrix of coefficients \(K \coloneqq (\kappa_{ij})\).
We may write
\(\XX(m_{ij} - 1;\bm) \coloneqq \XX(m_1,m_{12},\ldots,m_{ij}-1,\ldots,m_n)\),
for example, to indicate a change in exponent of some monomial.

As in \S\ref{SS:ConstRmorph}, the parameters in~\eqref{e:Uhsl3.Ansatz.1} are
determined through Conditions~\ref{D:QTHA.i},~\ref{D:QTHA.ii},
and~\ref{D:QTHA.iii} of Definition~\ref{D:QTHA}.
As before, Condition~\ref{D:QTHA.i} completely specifies the free parameters
in~\eqref{e:Uhsl3.Ansatz.1} and the remaining conditions need to be verified
with the resulting expression.
Here, we only perform the first step and leave the latter two conditions to the
interested reader.

\begin{remark}\label{rem:Uhsl3-other-conditions}
  Verification of Conditions~\ref{D:QTHA.ii} and~\ref{D:QTHA.iii} can be done
  in a manner similar  to the \(\Uhsl{2}\) case.
  That being said, the \(\Uhsl{n+1}\) case is much more complicated due to
  non-commuting \(q\)-exponentials in~\eqref{eq:Uhsln-R}.
  Ultimately, this problem is solved by studying in detail how
  \(q\)-exponentials commute in the presence of Lengthening and Shortening.
  This is done, for example, in~\cite{kt91} through their approach.
\end{remark}

\subsection{Deriving the Coefficients}\label{SS:Uhsl3-coeffs}
It suffices to impose Condition~\ref{D:QTHA.i} for \(\X_i\), \(\H_i\) and
\(\Y_i\), \(i = 1,\ldots,n\). In fact, since all relations involving the
\(\Y_i\) are mirror to those involving the \(\X_i\), the calculations required
for the \(\Y_i\) are completely analogous to those of the \(\X_i\). So we only
need to perform computations for the \(\H_i\) and \(\X_i\).

\subsubsection{For the generators \(\H_i\)}
Since the \(\H_i\) commute with \(\K_1,\ldots,\K_n\), we need only consider
the \(\X_j\) and \(\Y_j\) components of each general term
in~\eqref{e:Uhsl3.Ansatz.1}. Write a general term as
\(\X_1^{s_1}\X_{12}^{s_{12}} \cdots \X_n^{s_n} \otimes \Y_1^{t_1}\Y_{12}^{t_{12}} \cdots \Y_n^{t_n}\).
Using Lemma~\ref{L:calcUhsl3}\eqref{L:calcUhsl3.i} and~\eqref{L:calcUhsl3.ii},
Condition~\ref{D:QTHA.i} for the \(\H_i\) are equivalent to the system of
equations
\dmrjdel{	% START DELETE
\begin{align*}
  2(s_1 - t_1) + \sum_{j = 2}^n (s_{1j} - t_{1j}) - \sum_{j = 2}^n (s_{2j} - t_{2j}) & = 0, \\
  & \;\;\vdots \\
  -\sum_{i = 1}^{n-2}(s_{i,n-1} - t_{i,n-1}) + \sum_{i = 1}^{n-1}(s_{in} - t_{in}) + 2(s_n - t_n) & = 0.
\end{align*}
} % END DELETE
%-----------------------------------------------------
$$
\left.
\begin{array}{rcl}
 2(s_1 - t_1) + \sum_{j = 2}^n (s_{1j} - t_{1j}) - \sum_{j = 2}^n (s_{2j} - t_{2j})   &= &0, \\
 & \vdots& \\
  -\sum_{i = 1}^{n-2}(s_{i,n-1} - t_{i,n-1}) + \sum_{i = 1}^{n-1}(s_{in} - t_{in}) + 2(s_n - t_n)   &=& 0.
\end{array}
\right\}
$$
\mbox{}\\[5pt]
%----------------------------------------------------
These equations are satisfied when \(s_{ij} = t_{ij} = m_{ij}\) as
in~\eqref{e:Uhsl3.Ansatz.1}.

\subsubsection{For the generators \(\X_i\)}
Since \(\Delta(\X_i) = \X_i \otimes \K_i + \Kb_i \otimes \X_i\),
the Condition reads
\begin{equation}\label{e:Condition.1}
  (\K_i\otimes\X_i + \X_i\otimes\Kb_i) \cdot \sR
    = \sR \cdot (\X_i\otimes\K_i + \Kb_i\otimes\X_i).
\end{equation}
Following Remark~\ref{R:Observe}\hyperref[R:Observe.B]{(B)},
decompose the general term of~\eqref{e:Condition.1} by setting
\begin{equation}\label{e:AiBi'}
  \begin{aligned}
    A_i^+(\bm) & \coloneqq (\K_i \otimes \X_i) \cdot \bfe^K\, \KK(\bm)\XX(\bm) \otimes \KKb(\bm)\YY(\bm), &
    B_i^-(\bm) & \coloneqq (\X_i \otimes \Kb_i) \cdot \bfe^K\,\KK(\bm)\XX(\bm) \otimes \KKb(\bm)\YY(\bm), \\
    A_i^-(\bm) & \coloneqq \bfe^K\, \KK(\bm)\XX(\bm)\otimes \KKb(\bm)\YY(\bm) (\Kb_i \otimes \X_i), &
    B_i^+(\bm) & \coloneqq \bfe^K\,\KK(\bm)\XX(\bm)\otimes \KKb(\bm)\YY(\bm) (\X_i \otimes \K_i),
  \end{aligned}
\end{equation}
so~\eqref{e:Condition.1} is the assertion
\[ \sum_\bm A_i^+(\bm) + B_i^-(\bm) = \sum_\bm A_i^-(\bm) + B_i^+(\bm). \]
As in~\eqref{eq:AeqB}, we rearrange the sums so that equation~\eqref{e:Condition.1}
is equivalent to
\[ A_i \coloneqq \sum_\bm A_i^+(\bm) - A_i^-(\bm) = \sum_\bm B_i^+(\bm) - B_i^-(\bm) \eqqcolon B_i. \]
We wish to simplify the series on both sides.
In the \(\Uhsl{2}\) case, the essential simplification to the \(A\)-side came
in identifying a commutator \([\X,\Y^m]\) in~\eqref{e:Aform2}.
Analogously, if we straighten and apply the Shortening
Lemma~\ref{L:shortenUhsln} to the terms in \(A_i^+(\bm) - A_i^-(\bm)\), we find
a single term with \(\cdots \X_i\Y_i^{m_i}\cdots\) in \(A_i^+\) and a single
term with \(\cdots \Y_i^{m_i}\X_i \cdots\) in \(A_i^-\) which should be
combined to form a commutator \([\X_i,\Y_i^{m_i}]\).

\subsubsection{Exponential Prefactor}
To start combining \(A_i^+(\bm)\) with \(A_i^-(\bm)\), the exponential factor
\(\bfe^K\) of both terms should coincide after straightening.
From~\eqref{e:AiBi'}, this means that we should have
\((\K_i \otimes \X_i) \bfe^K = \bfe^K (\Kb_i \otimes \X_i)\).
Using Lemma~\ref{L:calcUhsl3}\eqref{L:calcUhsl3.iii} and Lemma~\ref{L:expUhsl3},
we obtain the following system of \(n\) linear equations:
\begin{align*}
  \kappa_{l,i-1} - 2\kappa_{l,i} + \kappa_{l,i+1} = -2\delta_{li}, \qquad l = 1,\ldots,n,
\end{align*}
where \(\kappa_{l,-1} \coloneqq 0\) and \(\kappa_{l,n+1} \coloneqq 0\).
This means \(K = (\kappa_{i,j})\) is twice the inverse of the Cartan matrix of
\(\fsl_{n+1}\), which can be computed to be~\cite[p.69]{hum72}
\begin{equation}\label{e:kappa}
  \kappa_{i,j} = \begin{cases}
    \frac{2}{n + 1}i(n - j + 1) & \text{if \(j \leq i\),} \\
    \frac{2}{n + 1}j(n - i + 1) & \text{if \(i \leq j\).}
  \end{cases}
\end{equation}

In the following, we may pull the exponential factor out of \(A_i\) and
\(B_i\), at which point it remains to straighten the terms of the form
\((\K_i \otimes \X_i) \KK(\bm)\XX(\bm) \otimes \KKb(\bm)\YY(\bm)\),
in order to deduce a recurrence relation on the coefficients \(\alpha(\bm)\).
Because of Shortening and Lengthening phenomena, each summand of \(A_i\) and
\(B_i\) will itself be a sum of terms indexed by certain segments of
\(\{1,\ldots,n\}\).
To be clear, \emph{straightening} the term \(\X_i\YY\) means that we need to
move \(\X_i\) through the \(\YY\) until it is immediately left of \(\Y_i\);
similarly, \emph{straightening} \(\YY\X_i\) means that \(\X_i\) needs to be
moved until it is immediately right of \(\Y_i\).

\subsection{The $A_i^\pm$ terms}

\subsubsection{Terms of \(A_i^+ = (\K_i \otimes \X_i) \cdot \sR\)}
Straightening \(A_i^+\) first involves moving \(\X_i\) past the \(\KKb\) term
in front of \(\YY\) in the second factor, and
Lemma~\ref{L:calcUhsl3}\eqref{L:calcUhsl3.iii} tells us that this
contributes a \(q\) coefficient for each \(\Kb_{a,b}\) such that
either one or two elements of \(\{i-1,i,i+1\}\) is between \(a\) and \(b\);
precisely, the contribution when \(\X_i\) is moved past
\begin{itemize}
  \item \(\Kb_{a,i-1}\) is \(\qb^{\frac{1}{2}m_{a,i-1}}\) and
    \(\Kb_{a,i}\) is \(q^{\frac{1}{2}m_{a,i}}\) for \(1 \leq a < i\);
  \item \(\Kb_i\) is \(q^{m_i}\); and
  \item \(\Kb_{i,b}\) is \(q^{\frac{1}{2}m_{i,b}}\) and
    \(\Kb_{i+1,b}\) is \(\qb^{\frac{1}{2}m_{i+1,b}}\) for \(i < b \leq n\).
\end{itemize}
Thus the terms of \(A_i^+\) obtain an overall coefficient contribution of
\begin{equation}\label{e:overall.qi}
  q^{m_i}
  \prod_{a = 1}^{i - 1} q^{\half m_{a,i} - \half m_{a,i-1}}
  \prod_{b = i+1}^n q^{\half m_{i,b} - \half m_{i+1,b}}.
\end{equation}

Next, \(\X_i\) must be commuted, from the left, through terms \(\Y_{s,i}\) for
\(s = 1,\ldots,i-1\), and each commutation creates a new term \(A_{s,i}^+\)
in which \(\Y_{s,i}^{m_{s,i}}\) is shortened.
The Shortening Lemma~\ref{L:shortenUhsln}\eqref{L:shortenUhsln.ii} shows that
the \(\YY\) term in \(A_{s,i}^+\) has the form
\[
  \big(\cdots \Y_{s,i-1}^{m_{s,i-1}}\Y_{s,i}^{m_{s,i}} \cdots\big)
    \mapsto
  \big(\cdots \Y_{s,i-1}^{m_{s,i-1}} \big(-\qr\qb^{m_{s,i} - 1}[m_{s,i}]\,\Kb_i^2\Y_{s,i-1}\Y_{s,i}^{m_{s,i} - 1}\big)\cdots\big).
\]
Coefficients of \(q\) are accrued in \(A_{s,i}^+\) by straightening the
\(\Kb_i^2\) past
\begin{itemize}
  \item \(\Y_{a,i-1}\) with contribution \(q^{m_{a,i-1}}\) for \(1 \leq a \leq s\); and
  \item \(\Y_{a,i}\) with contribution \(\qb^{m_{a,i}}\) for \(1 \leq a < s\).
\end{itemize}
In total, the coefficient of \(A_{s,i}^+\), \(1 \leq s < i\), is
\begin{equation}\label{e:Coef.Asi}
  -\alpha[m_{s,i}]\q^{\frac{3}{2} + m_i}
    \prod_{a = 1}^s \qb^{\half m_{a,i} - \half m_{a,i-1}}
    \prod^{i-1}_{a = s + 1} \q^{\half m_{a,i} - \half m_{a,i-1}}
    \prod_{b = i + 1}^n \q^{\half m_{i,b} - \half m_{i+1,b}}.
\end{equation}
Moreover, the exponents of the generators in \(A_{s,i}^+\) change as
\begin{equation}\label{e:Powers.Asi}
  \KK(\bm)\XX(\bm)\otimes\KKb(\bm)\YY(\bm)
    \mapsto \KK(\bm)\XX(\bm)\otimes\KKb(\bm)\YY(m_{s,i-1} + 1, m_{s,i}-1;\bm).
\end{equation}

Finally, let \(A_{i,i}^+\) be the remaining term in which \(\X_i\) has been
moved to the immediate left of \(\Y_i\), i.e. the term obtained by repeatedly
taking the first term in Lemma~\ref{L:shortenUhsln}\eqref{L:shortenUhsln.ii}.
The coefficient of \(A_{i,i}^+\) is simply~\eqref{e:overall.qi}.

\subsubsection{Terms of \(A_i^- = \sR\cdot (\Kb_i \otimes \X_i)\)}
Straightening the \(\Kb_i\) through the first tensor factor gives an overall
\(q\) coefficient contribution, which can be computed to
be~\eqref{e:overall.qi} again.
Straightening the \(\X_i\) term on the second factor involves moving past
terms \(\Y_{i,t}\) for \(i \leq t < n\), and the Shortening
Lemma~\ref{L:shortenUhsln}\eqref{L:shortenUhsln.i} shows that this creates a
new term \(A_{i,t}^-\) whose \(\YY\) factor has the form
\[
  \big(\cdots\Y_{i,t}^{m_{i,t}}\Y_{i,t+1}^{m_{i,t+1}}\cdots\big)
  \mapsto
  \big(\cdots\big(-\qbr[m_{i,t}]\,\K_i^2\Y_{i,t}^{m_{i,t} - 1}\Y_{i+1,t}\big)\Y_{i,t+1}^{m_{i,t+1}}\cdots\big).
\]
In straightening \(A_{i,t}^-\), the \(\K_i^2\) needs to be moved left and
the \(\Y_{i+1,t}\) needs to be moved right, so using the
Passing Lemma~\ref{L:passingUhsln}, the coefficient of \(A_{i,t}^-\) is
\begin{equation}\label{e:Coef.Ait}
  -\alpha(\bm)[m_{i,t}]q^{3m_i - \half}
    \prod_{a = 1}^{i - 1} \q^{\frac{3}{2}m_{a,i} - \frac{3}{2}m_{a,i-1}}
    \prod_{b = i + 1}^{t - 1}\q^{\frac{3}{2}m_{i,b} - \frac{3}{2}m_{i+1,b}}
    \prod_{b = t}^n \q^{\half m_{i,b} - \half m_{i+1,b}}
\end{equation}
and that the exponents in \(A_{i,t}^-\) change as
\begin{equation}\label{e:Powers.Ait}
  \KK(\bm)\XX(\bm)\otimes\KKb(\bm)\YY(\bm)
    \mapsto \KK(\bm)\XX(\bm)\otimes\KKb(\bm)\YY(m_{i,t} - 1, m_{i+1,t} + 1;\bm).
\end{equation}

As before, writing \(A_{i,i}^-\) for the remaining term where \(\X_i\) is
immediately right of \(\Y_i\), the coefficient here is just~\eqref{e:overall.qi}.

\subsection{The $B_i^\pm$ terms}
\subsubsection{Terms of \(B_i^+ = (\sR \cdot (\X_i \otimes \K_i))\)}
Straightening \(\K_i\) through \(\YY\) gives a coefficient
of~\eqref{e:overall.qi} for \(B_i^+\).
In straightening \(\X_i\) in the first factor, the Lengthening
Lemma~\ref{L:lengthenUhsln}\eqref{L:lengthenUhsln.i} shows that commuting past
\(\X_{i+1,t}\), \(1 \leq t < n\), creates a new term \(B_{i,t}^+\) of the form
\[
  \big(\cdots\X_{i+1,t-1}^{m_{i+1,t-1}}\X_{i+1,t}^{m_{i+1,t}}\cdots\big)
    \mapsto
  \big(\cdots \X_{i+1,t-1}^{m_{i+1,t-1}}\big(-\qr[m_{i+1,j}]\,\X_{i,t}\X_{i+1,t}^{m_{i+1,t} - 1}\big)\cdots\big).
\]
Straightening this involves moving \(\X_{i,t}\) to the left through
\(\X_{i+1,t'}\) for \(t' < t\), and then through \(\X_{i,t''}\) for \(t < t''\).
By the Passing Lemma~\ref{L:passingUhsln}\eqref{L:passingUhsln.ii},
there are no additional factors of \(q\) when moving past the \(\X_{i+1,b}\)
terms; by Lemma~\ref{L:passingUhsln}\eqref{L:passingUhsln.i} again, a factor of
\(\qb^{m_{i,b}}\) is obtained when moving past \(\X_{i,b}^{m_{i,b}}\).
The coefficient of \(B_{i,t}^+\) is thus
\begin{equation}\label{e:Coef.Bit}
  -\alpha(\bm)[m_{i,t}]\q^{m_i + \half}
    \prod_{a = 1}^{i - 1}\q^{\half m_{a,i} - \half m_{a,i-1}}
    \prod_{b = i+1}^t \q^{\half m_{i,b} - \half m_{i+1,b}}
    \prod_{b = t+1}^n \qb^{\half m_{i,b} + \half m_{i+1,b}}
\end{equation}
and the exponents in \(B_{i,t}^+\) change as
\begin{equation}\label{e:Powers.Bit}
  \KK(\bm)\XX(\bm)\otimes\KKb(\bm)\YY(\bm)
    \mapsto \KK(\bm)\XX(m_{i,t} + 1, m_{i+1,t} - 1; \bm)\otimes\KKb(\bm)\YY(\bm).
\end{equation}

This time, the remaining term \(B_{i,i}^+\) picks up a coefficient from the
Lengthening Lemma~\ref{L:lengthenUhsln} and from the
Passing Lemma~\ref{L:passingUhsln}\eqref{L:passingUhsln.i}, giving an overall
coefficient
\begin{equation*}
  \alpha(\bm)\q^{m_i}
    \prod_{a = 1}^{i - 1}\q^{\half m_{a,i} - \half m_{a,i-1}}
    \prod_{b = i+1}^n \qb^{\half m_{i,b} - \half m_{i+1,b}}.
\end{equation*}

\subsubsection{Terms of \(B_i^- = (\X_i \otimes \Kb_i) \cdot \sR\)}
Straightening \(\X_i\) through \(\KK\) in the first tensor factor gives a
coefficient contribution inverse of~\eqref{e:overall.qi} to \(B_i^-\).
It remains to move \(\X_i\) through the \(\XX\) from the left, in which case
lengthening occurs for every term \(\X_{s,i-1}\), \(1 \leq s < i\), yielding a
new term \(B_{s,i}^-\), which by the Lengthening Lemma~\ref{L:lengthenUhsln}\eqref{L:lengthenUhsln.ii}
has the form
\[
  \big(\cdots \X_{s,i-1}^{m_{s,i-1}}\X_{s,i}^{m_{s,i}} \cdots\big)
  \mapsto
  \big(\cdots \big(-\qr[m_{s,i-1}]\,\X_{s,i-1}^{m_{s,i-1} - 1}\X_{s,i}\big)\X_{s,i}^{m_{s,i}}\cdots\big).
\]
No further straightening is required in \(B_{s,i}^-\).
Note, however, that in order to get \(\X_i\) to \(\X_{s,i-1}\) in the first place,
it needs to pass through terms \(\X_{a,i-1}^{m_{a,i-1}}\) and
\(\X_{a,i}^{m_{a,i}}\) for \(1 \leq a < s\), which by the Passing
Lemma~\ref{L:passingUhsln}\eqref{L:passingUhsln.i}, give coefficient
contributions \(\q^{m_{a,i-1}}\) and \(\qb^{m_{a,i}}\), respectively.
Thus the coefficient of \(B_{s,i}^-\) is
\begin{equation}\label{e:Coef.Bsi}
  -\alpha(\bm)[n_{s,i-1}]\qb^{m_i - \half}
    \prod_{i = 1}^{s - 1}\qb^{\frac{3}{2}m_{a,i} - \frac{3}{2}m_{a,i-1}}
    \prod_{a = s}^{i - 1}\qb^{\half m_{a,i} - \half m_{a,i-1}}
    \prod_{b = i+1}^n \qb^{\half m_{i,b} - \half m_{i+1,b}}
\end{equation}
and the exponents in \(B_{s,i}^-\) change as
\begin{equation}\label{e:Powers.Bsi}
  \KK(\bm)\XX(\bm)\otimes\KKb(\bm)\YY(\bm)
    \mapsto \KK(\bm)\XX(m_{s,i-1} - 1, m_{s,i} + 1;\bm) \otimes \KKb(\bm)\YY(\bm).
\end{equation}

Let \(B_{i,i}^-\) the remaining term in which the exponent of \(\X_i\) is
increased by \(1\).
Then by reasoning as in the case of \(B_{i,i}^+\), we see the coefficient of
this term is
\begin{equation*}
  \alpha(\bm)\qb^{m_i}
    \prod_{a = 1}^{i - 1}\qb^{\frac{3}{2} m_{a,i} - \frac{3}{2} m_{a,i-1}}
    \prod_{b = i+1}^n \qb^{\half m_{i,b} - \half m_{i+1,b}}.
\end{equation*}

\subsection{Combining Diagonal Terms}
We now combine the diagonal terms of \(A_i\) and \(B_i\).
These simplifications should be compared to those made when computing \(\sR\)
for \(\Uhsl{2}\) in \S\ref{S:Rmorphism}.
First, the diagonal \(A\) terms \(A_{i,i}^+(\bm)\) and \(A_{i,i}^-(\bm)\)
differ only in the \(\YY\) component as
\begin{align*}
  A_{i,i}^+ & = \KK\XX \otimes \KKb(\cdots \X_i\Y_i^{m_i} \cdots), &
  A_{i,i}^- & = \KK\XX \otimes \KKb(\cdots \Y_i^{m_i}\X_i \cdots).
\end{align*}
These terms may be combined in \(A_i\) to obtain a commutator
\([\X_i,\Y_i^{m_i}]\), which can be simplified using
Lemma~\ref{L:calcUhSL2}\eqref{L:calcUhSL2.v}; denote the resulting term by
\(A_{i,i}\).
This term has coefficient
\begin{equation}\label{e:Coef.Aii}
  \alpha(\bm)[m_i]\,\q^{m_i}
    \prod_{a = 1}^{i - 1} \q^{\half m_{a,i} - \half m_{a,i-1}}
    \prod_{b = i+1}^n \q^{\half m_{i,b} - \half m_{i+1,b}}
\end{equation}
and exponent change of
\begin{equation}\label{e:Powers.Aii}
  \KK(\bm)\XX(\bm)\otimes\KKb(\bm)\YY(\bm)
    \mapsto \KK(\bm)\XX(\bm) \otimes \KKb(\bm)\YY(m_i - 1;\bm).
\end{equation}
Similarly, \(B_{i,i}^+(\bm)\) and \(B_{i,i}^-(\bm)\) can be combined by
factoring out
\[
  (q - \qb)\,
    \prod_{a = 1}^{i - 1}\qb^{\half m_{a,i} - \half m_{a,i-1}}
    \prod^n_{b = i+1} \qb^{\half m_{i,b} - \half m_{i+1,b}}
\]
from each of \(B_{i,i}^\pm(\bm)\).
This allows us to combine the generators in \(B_{i,i}^\pm(\bm)\), with
\[
  \frac{1}{q - \qb}
    \Big(\Big(\prod^{i - 1}_{a = 1}q^{m_{a,i} - m_{a,i-1}}\Big)q^{m_i} \Kb_i^{m_i - 1} - \Big(\prod^{i - 1}_{a = 1}\qb^{m_{a,i} - m_{a,i-1}}\Big)\qb^{m_i} \Kb_i^{m_i + 1}\Big)
\]
in place of the \(\Kb_i\) in \(\KKb\).
After factoring out \(\Kb_i^{m_i+1}\), this can be simplified to
\[ \big[\H_i + m_i + \sum_{a = 1}^{i - 1} m_{a,i} - m_{a,i-1}\big]. \]
After straightening, this will contribute the same quantum number as obtained
from simplifying the commutator in \(A_{i,i}\).
Overall, the coefficient of \(B_{i,i}\) is
\begin{equation}\label{e:Coef.Bii}
  \alpha(\bm)(q - \qb)
    \prod_{a = 1}^{i - 1}\qb^{\half m_{a,i} - \half m_{a,i-1}}
    \prod^n_{b = i+1} \qb^{\half m_{i,b} - \half m_{i+1,b}}
\end{equation}
and the change in exponent is due only to the additional \(\X_i\) term on the
right
\begin{equation}\label{e:Powers.Bii}
  \KK(\bm)\XX(\bm)\otimes\KKb(\bm)\YY(\bm)
    \mapsto \KK(\bm)\XX(m_i + 1;\bm) \otimes \KKb(\bm)\YY(\bm).
\end{equation}

\subsection{Recurrence for Coefficients}
Finally, a recurrence for the \(\alpha(\bm)\) is constructed by comparing like
terms in \(A_i = B_i\).
From~\eqref{e:Powers.Aii} and~\eqref{e:Powers.Bii}, the terms of \(A_{i,i}\)
agree in shape with those of \(B_{i,i}\) so coefficients can be compared after
making the shift \(m_i \mapsto m_i - 1\) in \(B_{i,i}\).
Equations~\eqref{e:Coef.Aii} and~\eqref{e:Coef.Bii} then give a recursion
relation for \(\alpha(\bm)\) with respect to the index \(m_i\):
\begin{equation}\label{e:Rec.Diagonal}
  \alpha(\bm)
    = \alpha(m_i - 1;\bm)\frac{(\q - \qb)}{[m_i]}\, \qb^{m_i}
      \prod_{a = 1}^{i - 1} \qb^{m_{a,i} - m_{a,i-1}}
      \prod_{b = i + 1}^n \qb^{m_{i,b} - m_{i+1,b}}.
\end{equation}

From~\eqref{e:Powers.Asi} and~\eqref{e:Powers.Bsi}, the coefficients of
\(A_{s,i}^+\) and \(B_{s,i}^-\) can be compared upon shifting
\(m_{s,i-1} \mapsto m_{s,i-1}\) for \(A_{s,i}^+\) and \(m_{s,i} \mapsto m_{s,i} - 1\) for
\(B_{s,i}\).
Equations~\eqref{e:Coef.Asi} and~\eqref{e:Coef.Bsi} then yield a recursion of
the form
\begin{equation}\label{e:Rec.General}
  \alpha(m_{s,i}-1;\bm) =
  -\alpha(m_{s,i-1} - 1;\bm)\frac{[m_{s,i}]}{[m_{s,i-1}]}\,
    q^{2m_i - m_{s,i} + m_{s,i-1}}
    \prod_{a = 1}^{i-1} \q^{m_{a,i} - m_{a,i-1}}
    \prod_{b = i + 1}^n \q^{m_{i,b} - m_{i+1,b}}.
\end{equation}
Comparing~\eqref{e:Powers.Ait} and~\eqref{e:Powers.Bit} shows that the
coefficients of \(A_{i,t}^-\) and \(B_{i,t}^+\) can be compared, after making
an appropriate exponent shift, and similar type of recursion relation can be
constructed using~\eqref{e:Coef.Ait} and~\eqref{e:Coef.Bit}.
However, the two sets of relations~\eqref{e:Rec.Diagonal}
and~\eqref{e:Rec.General} are sufficient to solve for \(\alpha(\bm)\).

\subsection{Solving the Recurrences}\label{SS:solve-recurrence}
The recursion~\eqref{e:Rec.Diagonal} can be solved directly.
For~\eqref{e:Rec.General}, notice that the relation expresses the changes in
\(\alpha(\bm)\) with respect to \(m_{s,i}\) in terms of changes with respect to
\(m_{s,i-1}\), an exponent indexed by a shorter interval.
Thus~\eqref{e:Rec.General} relates \(\alpha(m_{s,i} - 1)\) with
\(\alpha(m_{s,i-1} - 1)\), which, in turn, is related to \(\alpha(m_{s,i-2} - 1)\),
and so forth.
Ultimately, we obtain a relation between \(\alpha(m_{s,i}-1)\) and
\(\alpha(m_{s,s} - 1) = \alpha(m_s - 1)\).
Finally,~\eqref{e:Rec.Diagonal} can be applied to relate \(\alpha(m_{s,i} - 1)\)
with \(\alpha(\bm)\).
This process yields the following recurrence:
\begin{equation}\label{e:Rec.General.II}
  \alpha(\bm) =
  (-1)^{i - s}\alpha(m_{s,i} - 1;\bm) \frac{(\q - \qb)}{[m_{s,i}]}\q^{m_{s,i}}
    \prod^i_{a = 1}\qb^{m_{a,i}}
    \prod^n_{b = i+1}\q^{m_{i+1,b}}
    \prod^{s-1}_{a = 1}\q^{m_{a,s-1}}
    \prod^n_{b = s} \qb^{m_{s,b}}.
\end{equation}
Solving~\eqref{e:Rec.Diagonal} and~\eqref{e:Rec.General.II} separately and
putting the results together give
\[
  \alpha(\bm) =
    \prod^n_{a = 1}\prod^n_{b = 1}
      (-1)^{b - a} \frac{(q - \qb)^{m_{a,b}}}{[m_{a,b}]!}\,q^{\half m_{a,b}(m_{a,b} - 3)}
      \times (\text{cross terms})
\]
where the cross terms are indexed by pairs \(s < i\) and are of the form
\[
  \Big(
    \prod^i_{\substack{a = 1 \\ a \neq s}}\qb^{m_{s,i}m_{a,i}}
    \prod^n_{b = i+1}\q^{m_{s,i}m_{i+1,b}}
  \Big)
  \Big(
    \prod^{s-1}_{a = 1}\q^{m_{a,s-1}m_{s,i}}
    \prod^n_{\substack{b = s \\ b \neq i}} \qb^{m_{s,b}m_{s,i}}
  \Big).
\]
These cross terms can be eliminated by rearranging the general term of \(\sR\)
\[
  \KK(\bm)\XX(\bm) \otimes \KKb(\bm)\YY(\bm)
    \mapsto
    \big((\K_1\X_1)^{m_1}(\K_{12}\X_{12})^{m_{12}} \cdots\big)
      \otimes
    \big((\Kb_1\Y_1)^{m_1}(\Kb_{12}\Y_{12})^{m_{12}} \cdots\big)
\]
to consist of powers of \(\K_{a,b}\X_{a,b}\) and \(\Kb_{a,b}\Y_{a,b}\).
The factors of \(q\) arising from this rearrangement cancel the cross terms,
leaving only the first products in the expression of \(\alpha(\bm)\) above.

\subsection{An \(\sR\)-matrix for \(\Uhsl{n+1}\)}\label{RmatrixSLN2}
All together, the computations of \S\S\ref{SS:Uhsl3-coeffs}--\ref{SS:solve-recurrence}
show that
\begin{multline*}
  \sR = \exp\Bigg(\frac{h}{4}\sum_{i,j = 1}^n \kappa_{ij}\H_i \otimes \H_j\Bigg)
    \sum_\bm
    \Bigg[
      \Big(\prod^n_{a = 1}\prod^n_{b = 1} (-1)^{b - a} \frac{(q - \qb)^{m_{a,b}}}{[m_{a,b}]!}\,q^{\half m_{a,b}(m_{a,b} - 3)}\Big) \\
        \times
      \Big((\K_1\X_1)^{m_1} (\K_{12}\X_{12})^{m_{12}} \cdots (\K_n\X_n)^{m_n} \Big)
        \otimes
      \Big((\Kb_1\Y_1)^{m_1})(\Kb_{12}\Y_{12})^{m_{12}} \cdots (\Kb_n\Y_n)^{m_n}\Big)
    \Bigg],
\end{multline*}
where the \(\kappa_{ij}\) are as in~\eqref{e:kappa}.
Each summand is a product of \(q\)-exponential functions, as in
\S\ref{SS:q-exp}, so
\begin{equation}\label{eq:Uhsln-R}
  \sR =
  \exp\Bigg(\frac{h}{4}\sum_{i,j = 1}^n \kappa_{ij}\H_i \otimes \H_j\Bigg)
    \prod^\prec_{1 \leq a,b \leq n}
      \Exp{q}\Big((-1)^{b - a}\lambda_q\, \K_{a,b}\X_{a,b} \otimes \Kb_{a,b}\Y_{a,b}\Big)
\end{equation}
for an \(\sR\)-matrix for \(\Uhsl{n+1}\). Here,
\(\lambda_q = \qb(q - \qb)\) as in~\eqref{e:Ras:qExpFn} and the
\(\prec\) on the product signifies that terms in the product are taken with
respect to the ordering of indices in~\eqref{e:prec-order}.

\bibliographystyle{amsalpha}
\bibliography{HolfAlgSTR}
\end{document}